\documentclass[12pt]{article}
\usepackage{indentfirst, latexsym, bm, amssymb}

\topmargin  = -0.2 in \oddsidemargin = 0.25 in
\setlength{\textheight}{8.5in} \setlength{\textwidth}{6in}
\setlength{\unitlength}{1.0 mm}

\begin{document}

\newtheorem{theorem}{Theorem}[section]
\newtheorem{corollary}[theorem]{Corollary}
\newtheorem{definition}[theorem]{Definition}
\newtheorem{conjecture}[theorem]{Conjecture}
\newtheorem{question}[theorem]{Question}
\newtheorem{lemma}[theorem]{Lemma}
\newtheorem{proposition}[theorem]{Proposition}
\newtheorem{example}[theorem]{Example}
\newtheorem{problem}[theorem]{Problem}
\newenvironment{proof}{\noindent {\bf
Proof.}}{\rule{3mm}{3mm}\par\medskip}
\newcommand{\remark}{\medskip\par\noindent {\bf Remark.~~}}
\newcommand{\pp}{{\it p.}}
\newcommand{\de}{\em}

\newcommand{\JEC}{{\it Europ. J. Combinatorics},  }
\newcommand{\JCTB}{{\it J. Combin. Theory Ser. B.}, }
\newcommand{\JCT}{{\it J. Combin. Theory}, }
\newcommand{\JGT}{{\it J. Graph Theory}, }
\newcommand{\ComHung}{{\it Combinatorica}, }
\newcommand{\DM}{{\it Discrete Math.}, }
\newcommand{\ARS}{{\it Ars Combin.}, }
\newcommand{\SIAMDM}{{\it SIAM J. Discrete Math.}, }
\newcommand{\SIAMADM}{{\it SIAM J. Algebraic Discrete Methods}, }
\newcommand{\SIAMC}{{\it SIAM J. Comput.}, }
\newcommand{\ConAMS}{{\it Contemp. Math. AMS}, }
\newcommand{\TransAMS}{{\it Trans. Amer. Math. Soc.}, }
\newcommand{\AnDM}{{\it Ann. Discrete Math.}, }
\newcommand{\NBS}{{\it J. Res. Nat. Bur. Standards} {\rm B}, }
\newcommand{\ConNum}{{\it Congr. Numer.}, }
\newcommand{\CJM}{{\it Canad. J. Math.}, }
\newcommand{\JLMS}{{\it J. London Math. Soc.}, }
\newcommand{\PLMS}{{\it Proc. London Math. Soc.}, }
\newcommand{\PAMS}{{\it Proc. Amer. Math. Soc.}, }
\newcommand{\JCMCC}{{\it J. Combin. Math. Combin. Comput.}, }
\newcommand{\GC}{{\it Graphs Combin.}, }

\title{ Faber-Krahn type inequality for unicyclic graphs \thanks{
This work is supported by National Natural Science Foundation of
China (No:10971137), the National Basic Research Program (973) of
China (No.2006CB805900),  and a grant of Science and Technology
Commission of Shanghai Municipality (STCSM, No: 09XD1402500).  }}
\author{ Guang-Jun  Zhang, Jie Zhang, Xiao-Dong Zhang\thanks{Corresponding  author ({\it E-mail address:}
xiaodong@sjtu.edu.cn)}
\\
{\small Department of Mathematics},
{\small Shanghai Jiao Tong University} \\
{\small  800 Dongchuan road, Shanghai, 200240,  P.R. China}\\
In  Memory of Professor Ky Fan
 }
\maketitle
 \begin{abstract}
  The Faber-Krahn inequality
 states that the ball has  minimal first Dirichlet eigenvalue
  among all bounded domains with the  fixed volume in
$\mathbb{R}^n$.  In this paper,  we investigate  the similar
inequality for unicyclic graphs. The results show that the
Faber-Krahn type inequality also holds for  unicyclic graphs with a
given graphic unicyclic
  degree sequence  with minor conditions.

   \end{abstract}

{{\bf Key words:} First Dirichlet eigenvalue; Faber-Krahn type
inequality; degree sequence; unicyclic graph
 }

      {{\bf AMS Classifications:} 05C50, 05C07}.
\vskip 0.5cm

\section{Introduction}
The Faber-Krahn inequality which is a well-known result on the
Riemannian manifolds  states that the ball has  minimal first
Dirichlet eigenvalue among all bounded domains with the same volume
in $\mathbb{R}^n$ (with the standard Euclidean metric). It has been
first proved independently by Faber  and Krahn  for the
$\mathbb{R}^2$. A proof of the generalized version can be found
 in \cite{chavel1984}. Since the graph Laplacian can be
regarded as the discrete analog of the continuous
Laplace-Beltrami-operator on manifolds,  the Faber-Krahn inequality
for  graphs has received more and more attentions. Friedman
\cite{friedman1993} introduced the idea of a ``graph with boundary
'' and formulated the Dirichlet eigenvalue problem for graphs.
Leydold
 \cite{leydold1997} and \cite{leydold2002} proved that the
Faber-Krahn type inequality  held for regular trees and gave a
complete characterization of all extremal trees.  In 1998, Pruss
\cite{pruss1998} proposed the following question: which classes of
graphs  has the Faber-Krahn property? Recently,
B${\i}$y${\i}$ko\u{g}lu and Leydold \cite{biykoglu2007} proved that
 the Faber-Krahn inequality also held for trees with the same
degree sequence. The vertices of the unique extremal tree possesses
a spiral like ordering, i.e., ball approximations. Moreover, they
proposed the following problem.
\begin{problem}(\cite{biykoglu2007})
 Give a characterization of all graphs in a given class $\mathcal{C}$
with the Faber-Krahn property, i.e., characterize those graphs in
$\mathcal{C}$ which have minimal first Dirichlet eigenvalue for a
given ``volume''.
\end{problem}
 Motivated by the above question and results, we  investigate
the Faber-Krahn type inequality  for  unicyclic graphs with a given
 degree sequence.
 Before stating our main results, we
introduce some necessary notations.

 In this paper, we only consider simple and undirected graphs.
 Let $G=(V(G), E(G))$ be a graph of order $n$ with vertex
set $V(G)$ and edge set $E(G)$. Let $A(G)=(a_{uv})$ be the adjacency
matrix of $G$ with $a_{uv}=1$ for $u$ adjacent to $v$ and $0$ for
otherwise. The Laplacian matrix of $G$ is defined as
$L(G)=D(G)-A(G)$, where $d(v)$ is the degree of vertex $v$ and
$D(G)=diag(d(v),\ v\in V(G))$ is the degree diagonal matrix of $G$.
A connected graph is called to be {\it unicyclic} if the number of
vertices is equal to the number of edges. Then  a unicyclic graph
has the only one cycle.  A
 positive integer sequence $\pi
=(d_0,d_1,\cdots,d_{n-1})$ is called a {\it graphic unicyclic degree
sequence} if there exists  a unicyclic graph $G$ whose degree
sequence  is $\pi$. For a given graphic unicyclic degree sequence
$\pi=(d_0, d_1, \cdots,d_{n-1}), $
 denote by  $\mathcal{U}_{\pi}$  the set of all
 unicyclic graphs  with the degree sequence $\pi$.
The main results of this paper
can be stated as follows:
\begin{theorem}\label{theorem1.1}  For a given graphic unicyclic degree sequence
$\pi=(d_0, d_1, \cdots,d_{n-1}), $ with $3\le d_0\le\dots\le d_k $
and $d_{k+1}=\cdots=d_{n-1}=1$,
  let $G=(V_0\cup \partial V, E_0\cup \partial E)$ be a  graph with
  the Faber-Krahn property in $\mathcal{U}_{\pi}.$ Then $G$ has an
 SLO-ordering (see in section 3) consistent with the first eigenfunction $f$ of
 $G$ in such a way that $ v \prec u$ implies $f(v) \geq f(u)$.
\end{theorem}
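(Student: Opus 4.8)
The plan is to combine a local edge-switching (rearrangement) argument with the Rayleigh--Ritz characterization of the first Dirichlet eigenvalue, and then to read the ordering off. Write $\lambda=\lambda_1(G)$ and let $f$ be a first Dirichlet eigenfunction of $G$. One first records the standard facts: since $G$ is connected, the graph induced on $V_0$ is connected, so by a Perron--Frobenius argument $f$ may be chosen positive on $V_0$; $f\equiv 0$ on $\partial V$; $f$ satisfies $\lambda f(v)=\sum_{u\sim v}\bigl(f(v)-f(u)\bigr)+b(v)f(v)$ for $v\in V_0$, where $b(v)$ is the number of boundary neighbours of $v$; and $\lambda=\min\{\sum_{uv\in E}(g(u)-g(v))^2 : g|_{\partial V}=0,\ \sum_v g(v)^2=1\}$. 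The engine is the following switching lemma. Let $v_1u_1,\,v_2u_2\in E_0$ with $v_1\neq v_2$, $u_1\neq u_2$, $f(v_1)\ge f(v_2)$, $f(u_1)\le f(u_2)$, $v_1u_2,\,v_2u_1\notin E$, and suppose $G'=G-\{v_1u_1,v_2u_2\}+\{v_1u_2,v_2u_1\}$ is still a connected unicyclic graph, so that $G'\in\mathcal{U}_\pi$. Using $f$ as a test function for $G'$ together with the elementary inequality
\[
\bigl(f(v_1)-f(u_2)\bigr)^2+\bigl(f(v_2)-f(u_1)\bigr)^2\ \le\ \bigl(f(v_1)-f(u_1)\bigr)^2+\bigl(f(v_2)-f(u_2)\bigr)^2 ,
\]
we obtain $\lambda_1(G')\le\lambda$, since only the numerator of the Rayleigh quotient changes and it does not increase. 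As $G$ has the Faber--Krahn property, $\lambda_1(G')\ge\lambda$; hence $\lambda_1(G')=\lambda$, $f$ is a first eigenfunction of $G'$ as well, and the displayed inequality is in fact an equality, forcing $\bigl(f(v_1)-f(v_2)\bigr)\bigl(f(u_2)-f(u_1)\bigr)=0$. The upshot, which is the property of $G$ we shall exploit, is that \emph{no admissible switch removes an inversion of $f$}: whenever $v_1u_1,v_2u_2\in E$ with $f(v_1)>f(v_2)$ and $f(u_1)<f(u_2)$, at least one of $v_1u_2,v_2u_1$ is already an edge of $G$, or the switch would leave $\mathcal{U}_\pi$ (disconnecting $G$ or altering its cycle).

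From this rigidity I would recover, clause by clause, the conditions defining an SLO-ordering in Section~3. Fix an $f$-maximal interior vertex $v_0$ as root and order $V$ by non-increasing $f$-value, breaking ties by the rule built into the definition of an SLO-ordering; since $f>0$ on $V_0$ and $f\equiv 0$ on $\partial V$, the degree-$1$ vertices of $\partial V$ automatically come last, where consistency with $f$ is vacuous, as it is across the $V_0/\partial V$ interface. It then remains to check that this order is an SLO-ordering. First, $f$ has no strict interior local minimum: if $v\in V_0$ had $b(v)=0$ and $f(v)\le f(u)$ for every neighbour $u$, with strict inequality somewhere, the eigenequation would give $\lambda f(v)=\sum_{u\sim v}\bigl(f(v)-f(u)\bigr)<0$, contradicting $\lambda f(v)>0$; if $b(v)>0$, a pendant edge at $v$ would be re-attachable by a switch to a vertex of strictly larger $f$-value, which the rigidity forbids unless it is impossible inside $\mathcal{U}_\pi$. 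Using this together with the switching lemma, the BFS-type clause follows: every initial segment of the order induces a connected subgraph and every non-root vertex has a strictly earlier neighbour. The degree-monotonicity and greedy-attachment clauses follow by applying the switching lemma to a pendant edge or a bridge at a later, higher-degree vertex and re-attaching it nearer $v_0$: rigidity forces the sorted order to realise the greedy/spiral construction of Section~3, so the order is an SLO-ordering and, by construction, consistent with $f$.

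The genuinely new ingredient, relative to the tree case of B${\i}$y${\i}$ko\u{g}lu and Leydold, is the unique cycle $C$ of $G$; here the hypothesis $d_0\ge 3$ helps, since it makes every vertex of $C$ incident with at least one edge off $C$, giving the switching arguments room to operate. One must first locate $v_0$ relative to $C$ --- either $v_0\in V(C)$, or the path from $v_0$ to $C$ is $f$-monotone, both following from the no-local-minimum analysis --- and then show that reading $C$ from the vertex of $V(C)$ closest to $v_0$ and proceeding along its two arms in SLO order is consistent with $f$; this uses switches that exchange a cycle edge for a tree edge, for which one verifies case by case that $G'$ is again connected with exactly one cycle. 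The relative order of two subtrees hanging from $C$ is treated the same way, the admissibility of each switch depending on whether the four vertices involved lie on $C$, on a bridge, or strictly inside a hanging subtree.

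I expect the main obstacle to be exactly this bookkeeping: every candidate switch must be shown to keep the graph connected and unicyclic --- in particular never to create a second cycle or to destroy $C$ --- so the switching lemma has to be invoked through a case analysis of the positions of $v_1,v_2,u_1,u_2$ relative to $C$, essentially a Kelmans-type transformation adapted to $\mathcal{U}_\pi$. A secondary difficulty is the equality regime: when $f$ agrees on $\{v_1,v_2\}$ or on $\{u_1,u_2\}$ the switch is only eigenvalue-neutral, so it yields not a contradiction but another extremal graph $G'\in\mathcal{U}_\pi$ on which $f$ is still a first eigenfunction and which is ``closer'' to SLO-shape; a monovariant --- for instance the number of $f$-inversions, or a lexicographic measure of the deviation of the vertex order from the greedy construction --- then shows that after finitely many neutral switches one reaches a graph carrying the SLO-ordering, and a final step (uniqueness of the Faber--Krahn graph in $\mathcal{U}_\pi$, or the observation that $f$ being a common first eigenfunction forces that graph and $G$ to be isomorphic compatibly with $f$) transfers the ordering back to $G$.
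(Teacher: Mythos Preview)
Your switching lemma is exactly the paper's engine (Lemma~3.3 and Corollary~3.4), but you are missing the two structural lemmas that make the argument short, and your handling of the equality regime is circular.

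First, the paper proves (Lemma~3.5) that \emph{every} vertex on the cycle $C$ has strictly larger $f$-value than \emph{every} vertex off $C$. This is not a local-minimum statement; it is proved by assuming $f(u)\ge f(x)$ for some $u\notin V(C)$ and $x\in V(C)$, taking a path $u,u_1,\dots,u_m$ from $u$ to a boundary vertex $u_m$ inside the pendant tree at $u$, and applying Corollary~3.4 to the switch $G-xy-u_{i-1}u_i+yu_{i-1}+xu_i$ (alternating $x,y$ along a cycle edge $xy$) at each step. This forces $f(u_i)>\min\{f(x),f(y)\}>0$ all the way down to $f(u_m)$, contradicting $u_m\in\partial V$. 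Second (Lemma~3.6), once Lemma~3.5 is known, three direct switches show $v_0v_1,v_0v_2,v_1v_2\in E(G)$: the cycle is a triangle on the three vertices of largest $f$-value. Your proposal entertains both ``$v_0\notin V(C)$'' and ``$C$ longer than a triangle'' and then plans a case analysis over positions relative to $C$; the paper eliminates both possibilities up front, after which the BFS relabelling and the layer-by-layer induction go through exactly as in the tree case, with no cycle bookkeeping left.

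Second, your equality-regime plan invokes uniqueness of the Faber--Krahn graph to transfer the SLO-ordering back to $G$; but that uniqueness is Theorem~1.2, which is proved \emph{from} Theorem~1.1, so this is circular. The paper sidesteps the issue because Lemma~3.3 is stated with a strict conclusion whenever one of the two $f$-inequalities is strict. Every contradiction in the induction (the Claim in the proof of Theorem~1.1) starts from a strict hypothesis $f(v_{s,i})<f(v_{s,j})$ or $f(v_{s,n_s})<f(v_{s+1,1})$, so the switch strictly decreases $\lambda$ and no monovariant or neutral-step argument is needed. The tie-breaking within a layer is handled simply by the relabelling convention, not by further switching.
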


\begin{theorem}
\label{theorem1.2} For a given graphic unicyclic degree sequence
$\pi=(d_0, d_1, \cdots,d_{n-1}), $
 with $3\le d_0\le\dots\le d_k $ and $d_{k+1}=\cdots=d_{n-1}=1$,
Then $U_{\pi}^{\ast}$ (see in section 4) is the only one   graph
with the Faber-Krahn property in $\mathcal{U}_{\pi}$, which can be
regarded as  ball approximation.
\end{theorem}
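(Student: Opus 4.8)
The plan is to build on Theorem \ref{theorem1.1}, which already guarantees that any Faber--Krahn extremal graph $G \in \mathcal{U}_\pi$ admits an SLO-ordering (the spiral-like/ball-approximation order) compatible with a first Dirichlet eigenfunction $f$, in the sense that earlier vertices carry larger eigenfunction values. The strategy is therefore to show that this SLO-ordering together with the prescribed degree sequence $\pi$ determines the graph \emph{uniquely}, and that the resulting graph is exactly the distinguished graph $U_\pi^{\ast}$ constructed in section 4. First I would recall the construction of $U_\pi^{\ast}$: vertices are laid out level by level (a BFS-type layering from the cycle outward, or from the designated root), within each level the available degrees from $\pi$ are assigned greedily in a fixed monotone fashion, and each new vertex is joined to the earliest possible already-placed vertex of not-yet-saturated degree. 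The claim is that an SLO-ordering forces precisely this adjacency pattern.

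The key steps, in order, would be: (i) Show that in any extremal $G$ the unique cycle must sit at the ``top'' of the SLO-ordering — i.e.\ the cycle vertices are an initial segment (or close to it) under $\prec$ — using the eigenvalue-decreasing rearrangement lemmas from sections 3--4 (the analogues of the tree ``shifting'' operations of B\i y\i ko\u glu--Leydold), since moving the cycle toward the high-eigenfunction region cannot increase the first Dirichlet eigenvalue. (ii) Pin down the length of the cycle and which degrees are assigned to its vertices: again by a local switching argument, if the cycle used a non-extremal choice of degrees one could perform a degree-preserving edge swap strictly decreasing the eigenvalue, contradicting the Faber--Krahn property. (iii) Once the cycle is fixed, argue that the pendant trees hanging off the cycle are themselves forced: by Theorem \ref{theorem1.1} each is SLO-ordered, and the B\i y\i ko\u glu--Leydold uniqueness result for trees with a given degree sequence (which I may invoke as a known result, being cited in the excerpt) shows each subtree is the greedy ``ball'' tree; a balancing argument across the cycle vertices (comparing eigenfunction values at the attachment points) fixes how the remaining degrees $d_0,\dots,d_k$ and the pendant vertices are distributed among those subtrees. (iv) Conclude that the only graph satisfying all these constraints is $U_\pi^{\ast}$, and hence, since $U_\pi^{\ast}$ does lie in $\mathcal{U}_\pi$ and attains the minimum (it is built to be SLO-ordered), it is the unique Faber--Krahn graph.

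For the uniqueness sharpening I would use the standard Perron--Frobenius/strict-monotonicity observation: a first Dirichlet eigenfunction of a connected graph with boundary is strictly positive on the interior, so the eigenfunction inequalities produced by the rearrangement steps are strict unless the rearrangement is trivial; this upgrades ``no larger eigenvalue'' to ``strictly larger unless already in the canonical form,'' which is what rules out every competitor except $U_\pi^{\ast}$.

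The main obstacle I expect is step (iii): coordinating the pendant-tree structure \emph{around} the cycle. For trees there is a single root and the greedy ball construction is clean; here the cycle gives several attachment points whose eigenfunction values are comparable but not a priori equal, and one must show that the globally optimal allocation of the degree multiset and the leaves among these subtrees is the one prescribed by $U_\pi^{\ast}$. This requires a careful cross-subtree exchange argument — essentially a simultaneous rearrangement that respects both the degree sequence and the cyclic adjacency — and verifying that each such exchange strictly decreases the Rayleigh quotient $f^{\top} L(G) f / f^{\top} f$ (restricted to the Dirichlet form) unless $G \cong U_\pi^{\ast}$. The condition $3 \le d_0$ in the hypothesis is presumably what makes this bookkeeping go through cleanly, by preventing degenerate cases where cycle vertices have no pendant subtree at all.
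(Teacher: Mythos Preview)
Your starting point is right --- Theorem \ref{theorem1.1} hands you the SLO-ordering compatible with $f$ --- but two key features of the paper's argument are missing from your plan, and the absence of the first one is what makes your step (iii) look hard.

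First, the cycle is not something whose length you still need to ``pin down.'' Lemma~\ref{lemma3.6} (proved before Theorem~\ref{theorem1.2}) already shows that under the hypothesis $d_i\ge 3$ for all interior vertices, the three vertices with largest $f$-value induce a triangle, and this triangle is the cycle of $G$. So the cycle is the triangle $v_0v_1v_2$ sitting at the very top of the SLO-order; there is no residual freedom in its length or position. Your step (ii) is therefore already done.

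Second, the paper does \emph{not} decompose $G$ into cycle plus pendant subtrees and then balance degree allocations across attachment points. That route is exactly where you anticipate trouble, and the paper sidesteps it entirely. Instead the argument is global: one shows directly that along the SLO-order $v_0\prec v_1\prec\cdots\prec v_{k-1}$ the degrees satisfy $d(v_0)\le d(v_1)\le\cdots\le d(v_{k-1})$. The tool is Lemma~\ref{lemma4.1}: if $d(v_t)>d(v_{t+1})$ for some $t$, move the excess children of $v_t$ over to $v_{t+1}$; since $f(v_t)\ge f(v_{t+1})\ge f(\text{moved children})$ this does not increase $\lambda$, and Lemma~\ref{lemma4.2} supplies the strict inequality $f(v_t)>f(\text{some child})$ needed to make the drop strict. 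Once degree monotonicity is established, $d(v_i)=d_i$ for all $i$, and the SLO adjacency rules then force $G\cong U_\pi^\ast$ layer by layer.

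So your ``main obstacle'' --- the cross-subtree exchange --- never arises. The missing ingredient in your plan is the degree-monotonicity claim along the SLO-order, proved by a single edge-shifting lemma rather than by coordinating separate tree arguments.
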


\begin{remark}
If the frequency of 2 in $\pi$ is at least one, then
Theorems~\ref{theorem1.1} and ~\ref{theorem1.2} may not hold (see in
section 5).
\end{remark}

The rest of this paper is organized as follows: In section 2, we
recall some notations of the
 first Dirichlet eigenvalue of a graph with boundary.
The proof of Theorems~\ref{theorem1.1} and \ref{theorem1.2} will be
presented in sections 3 and 4, respectively. In section 5,  some
examples and  remarks  explain that Theorems~\ref{theorem1.1} and
\ref{theorem1.2} do not generally hold  for a given graphic
unicyclic degree sequence with the frequency of 2 being at least
one.

\section{\large\bf{The first Dirichlet eigenvalue}}
A graph with boundary $G=(V_0\cup \partial V$,
 $E_0\cup \partial E)$ consists of a set of interior vertices
 $V_0$,
boundary vertices $\partial V$, interior edges $E_0$ that connect
interior vertices, and boundary edges $\partial E$ that join
interior vertices with boundary vertices  (for example, see
\cite{chung1997} or \cite{friedman1993}). Throughout this paper we
always assume that the degree of any boundary vertex is 1 and the
degree of any interior vertex is at least 2.

  A real number $\lambda$ is called a {\it Dirichlet eigenvalue} of $G$
  if there exists a function $f \neq 0$
such that
they satisfy the Dirichlet eigenvalue problem:
 \begin{displaymath}
\left \{ \begin{array}{ll}
L(G)f(u)=\lambda f(u) & u \in V_0;\\
f(u)=0 & u \in \partial V.\\
\end{array} \right.
\end{displaymath}
The function $f$ is called an {\it eigenfunction} corresponding to
$\lambda$.
  \begin{definition}\label{definition1.1}
 (\cite{biykoglu2007}). A graph with boundary has the Faber-Krahn
property if it has minimal first Dirichlet eigenvalue among all
graphs with the same ``volume'' in a particular graph class.
\end{definition}

In this paper, we use a given graphic unicyclic  degree sequence as
the volume and the unicyclic graphs with this volume as the graph
class. The Rayleigh quotient of the Laplace operator $L$ on
real-valued functions $f$ on $V(G)$ is
$$R_{G}(f)=\frac{<Lf,f>}{<f,f>}=\frac{\sum\limits_{uv\in E(G)}(f(u)-f(v))^{2}}{\sum\limits_{v\in V(G)}f^{2}(v)}.$$
 If $\lambda(G)$ is the first Dirichlet eigenvalue of
 $G$, then
$$\lambda(G)= \min\limits_{f\in S}R_{G}(f)=\min\limits_{f\in
S}\frac{<Lf,f>}{<f,f>},$$  where $S$ is the set of all real-valued
functions on $V(G)$ with the constraint $f|_{\partial V} = 0$.
Moreover, if $R_{G}(f)=\lambda(G)$ for a function $f \in S$, then
$f$ is an eigenfunction of $\lambda(G)$ (see \cite{biykoglu2007} or \cite{friedman1993}).

\section{\large\bf{The proof of Theorem~\ref{theorem1.1}}}
In order to prove Theorem~\ref{theorem1.1}, we need some notations
and lemmas.
  B${\i}$y${\i}$ko\u{g}lu and Leydold  \cite{biykoglu2007}
 extended the concept of an SLO-ordering for describing the trees with
the Faber-Krahn property, which is introduced by Pruss (see
\cite{pruss1998}). The notation of an SLO-ordering may be extended
for
 any connected graphs.

\begin{definition}\label{definition3.1} (\cite{biykoglu2007})Let $G=(V_0\cup \partial V, E_0\cup
\partial E)$ be a connected graph with root $v_0$. Then a
well-ordering $\prec$ of the vertices is called spiral-like
(SLO-ordering for short) if
 the following holds for all vertices $u, v, x, y \in V(G)$:

 (1) $ v\prec u $ implies $h(v)\leq h(u)$, where $h(v)$ denotes the distance between $v$ and $v_0$;

 (2) let $uv \in E(G)$, $xy \in E(G), uy \notin E(G)$,
 $xv \notin E(G)$
with $h(u)=h(v)-1$ and $h(x)=h(y)-1$. If $u \prec x$, then $v \prec y$ ;

 (3) if $ v \prec u$ and $ v\in \partial V$, then $u \in \partial V$.
\end{definition}

Clearly,  if $G$ is a tree, an SLO-ordering of $G$  is consistent
with the definition of an SLO-ordering in \cite{biykoglu2007}.
Moreover, if there exists a positive integer $r$ such that  the
number of
 vertices $v$ with $h(v)=i+1$ is not less than the number of  vertices
$v$ with  $h(v)=i$ for $i=1, \cdots, r-1$,  and $h(v) \in \{r,
r+1\}$ for any boundary vertex $v \in
\partial V$,  $G$ is called a {\it ball approximation}. The
graph $G$ in Fig.~1  has an SLO-ordering and is a ball
approximation.

\setlength{\unitlength}{1mm}
\begin{picture}(120, 40)
\centering \put(60,35){\circle*{1}}
   \put(40,30){\circle*{1}}
\put(60,30){\circle*{1}} \put(80,30){\circle*{1}}
\put(40,25){\circle*{1}} \put(55,25){\circle*{1}}
\put(70,25){\circle*{1}} \put(80,25){\circle*{1}}
\put(85,25){\circle*{1}} \put(35,20){\circle*{1}}
\put(45,20){\circle*{1}} \put(50,20){\circle*{1}}
\put(60,20){\circle*{1}} \thicklines

\qbezier(40, 30)(65, 25)(80, 30) \put(60,35){\line(-4,-1){20}}
\put(60,35){\line(0,-1){5}} \put(60,35){\line(4,-1){20}}
\put(40,30){\line(0,-1){5}} \put(60,30){\line(-1,-1){10}}
\put(60,30){\line(2,-1){10}} \put(80,30){\line(0,-1){5}}
\put(40,25){\line(-1,-1){5}} \put(40,25){\line(1,-1){5}}
\put(55,25){\line(-1,-1){5}} \put(55,25){\line(1,-1){5}}
\put(80,30){\line(1,-1){5}} \put(60,36){$v_0$} \put(36,30){$v_1$}
\put(63,30){$v_2$} \put(81,30){$v_3$} \put(36,25){$v_4$}
\put(57,25){$v_5$} \put(68,22){$v_6$} \put(78,22){$v_7$}
\put(83,22){$v_8$} \put(33,17){$v_9$} \put(42,17){$v_{10}$}
\put(48,17){$v_{11}$} \put(58,17){$v_{12}$}

\put(10,5){\small{Fig.1 $\displaystyle \hbox{ $G$ with degree
sequence} \  \pi=(3, 3,3,3,3,4,1, 1,1,1, 1,1, 1). $}}
\end{picture}
\begin{lemma}\label{lemma3.2}
(\cite{friedman1993}) Let $G=(V_0 \cup
\partial V, E_0 \cup \partial E)$ be a connected graph with boundary. Then

(1) $\lambda(G) $ is a positive simple eigenvalue;

(2) An eigenfunction $f$ of the eigenvalue $\lambda(G)$ is either
positive or negative on all interior vertices of $G$.
\end{lemma}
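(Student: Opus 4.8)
The two assertions are the discrete Dirichlet analogue of the Perron–Frobenius theory, and the plan is to reduce the problem to a single irreducible nonnegative matrix. First I would restrict the Dirichlet eigenvalue problem $L(G)f|_{V_0}=\lambda f|_{V_0}$, $f|_{\partial V}=0$ to the interior vertices: writing $L_0$ for the principal submatrix of $L(G)$ indexed by $V_0$, the eigenpairs $(\lambda, f)$ with $f\neq 0$ correspond exactly to eigenpairs of $L_0$, since the boundary rows contribute nothing once $f|_{\partial V}=0$. The matrix $L_0$ is symmetric, so $\lambda(G)=\min_{f\in S}R_G(f)$ is its smallest eigenvalue with a genuine eigenvector.

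The main point is to show $M:=cI-L_0$ is a nonnegative irreducible matrix for $c$ large enough (e.g.\ $c=2\max_v d(v)$). Nonnegativity is immediate: the off-diagonal entries of $-L_0$ are the adjacency entries $a_{uv}\ge 0$, and the diagonal entries $c-d(u)$ are positive by the choice of $c$. Irreducibility is where the connectedness hypothesis enters, and this is the step requiring the most care: the support of $M$ restricted to $V_0$ is the induced subgraph $G[V_0]$, which need \emph{not} be connected even though $G$ is, because removing the boundary vertices can disconnect $G$. Here I would use the standing assumption (stated in section 2) that every interior vertex has degree at least $2$ while every boundary vertex has degree $1$: since boundary vertices are leaves, deleting $\partial V$ cannot disconnect $G$, so $G[V_0]$ is connected and hence $M$ is irreducible. (If $V_0=\emptyset$ or $|V_0|=1$ the statements are trivial or vacuous, so assume $|V_0|\ge 2$.)

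With $M$ symmetric, nonnegative and irreducible, the Perron–Frobenius theorem gives that the spectral radius $\rho(M)$ is a simple eigenvalue whose eigenvector $f$ can be taken entrywise positive on $V_0$, and no other eigenvector of $M$ has all entries of one sign. Translating back through $\lambda = c-\rho(M)$ and $L_0=cI-M$: the eigenvalue $\lambda(G)$ is simple, and its eigenfunction is (up to sign) strictly positive on all interior vertices, which is assertion (2); moreover any eigenfunction for a different Dirichlet eigenvalue is orthogonal to $f$ and hence must change sign on $V_0$. Finally, positivity $\lambda(G)>0$: from the Rayleigh quotient $\lambda(G)=\min_{f\in S}\frac{\sum_{uv\in E(G)}(f(u)-f(v))^2}{\sum_{v}f^2(v)}\ge 0$, and equality would force $f$ to be constant on each connected component of $G$; since $G$ is connected and $f$ vanishes on the nonempty boundary $\partial V$, this forces $f\equiv 0$, a contradiction. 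Hence $\lambda(G)>0$, completing assertion (1). $\hfill\blacksquare$
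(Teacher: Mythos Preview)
The paper does not supply its own proof of this lemma; it is quoted as a known result from Friedman~\cite{friedman1993} and used without argument. Your proof via Perron--Frobenius is the standard route and is correct: restricting to the principal submatrix $L_0$ on $V_0$, choosing $c$ large so that $M=cI-L_0$ is entrywise nonnegative, and then invoking Perron--Frobenius for irreducible nonnegative matrices to obtain simplicity of $\rho(M)$ (hence of $\lambda(G)=c-\rho(M)$) and a strictly signed eigenvector on $V_0$. You are right to flag irreducibility as the delicate point, and your resolution---that the paper's standing assumption makes every boundary vertex a leaf, so deleting $\partial V$ cannot disconnect $G$ and $G[V_0]$ remains connected---is exactly what is needed. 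The positivity $\lambda(G)>0$ via the Rayleigh quotient is also fine, under the implicit assumption $\partial V\neq\emptyset$ (which the paper's setup guarantees, since the degree sequences always contain entries equal to~$1$). In short, there is nothing to compare against in the paper itself; your argument correctly supplies what the paper leaves as a citation.
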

Clearly, there exists only one eigenfunction $f$  of $\lambda(G)$
that satisfies  $f(v)>0$ for $v \in V_0$,
 $f(u)=0$ for $u \in \partial V$
 and $||f||$=1 by Lemma~\ref{lemma3.2}. Moreover, $f$ is called
 the {\it first eigenfunction} of $G$.
 Let $G-uv$ denote the graph obtained from $G$ by  deleting an edge $uv$ in
$G$ and $G+uv$ denote the graph obtained from $G$ by adding an edge
$uv$. The following result is from \cite{biykoglu2007}.

\begin{lemma}\label{lemma3.3}(\cite{biykoglu2007})
Let $G=(V_0\cup \partial V, E_0\cup \partial E)$ be a connected
graph. Suppose that  there exist four vertices $u_1$, $v_1$, $v_2
\in V_0$ and $u_2 \in V_0\cup \partial V$ with $u_1v_1$, $u_2v_2 \in
E_0\cup
\partial E$ and $u_1u_2, v_1v_2\notin E_0\cup \partial E.$ Let
$G^{\prime}=G-u_1v_1-u_2 v_2+u_1 u_2+v_1v_2$ and $ f $ be the first
eigenfunction of $G$.  If $f(v_1)\geq f(u_2)$ and $f(v_2)\geq
f(u_1)$, then
$$\lambda (G^{\prime})\leq \lambda (G).$$
Moreover, inequality  is strict if one of the two inequalities is
strict.
\end{lemma}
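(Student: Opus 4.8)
The plan is to apply the Rayleigh quotient characterization of the first Dirichlet eigenvalue recalled in Section~2, exploiting that $G$ and $G'$ differ in exactly two edges. Since $G$ and $G'$ have the same vertex set and the same boundary $\partial V$, the first eigenfunction $f$ of $G$ is an admissible test function for $G'$ (it vanishes on $\partial V$), so $\lambda(G')\le R_{G'}(f)$. The denominator $\sum_v f^2(v)$ is the same in $R_G(f)$ and $R_{G'}(f)$, and the only edges that change are $u_1v_1,u_2v_2$ (in $G$ but not $G'$) and $u_1u_2,v_1v_2$ (in $G'$ but not $G$); hence
\[
\langle L(G')f,f\rangle-\langle L(G)f,f\rangle=(f(u_1)-f(u_2))^2+(f(v_1)-f(v_2))^2-(f(u_1)-f(v_1))^2-(f(u_2)-f(v_2))^2 .
\]

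First I would simplify this difference by a direct expansion, which collapses it to $2\bigl(f(u_1)-f(v_2)\bigr)\bigl(f(v_1)-f(u_2)\bigr)$. The hypotheses give $f(v_1)-f(u_2)\ge 0$ and $f(u_1)-f(v_2)\le 0$, so the product is $\le 0$; dividing by $\langle f,f\rangle>0$ yields $R_{G'}(f)\le R_G(f)=\lambda(G)$, whence $\lambda(G')\le R_{G'}(f)\le\lambda(G)$.

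For the strict part, I would argue by contradiction. If the displayed difference is $<0$ then $\lambda(G')\le R_{G'}(f)<\lambda(G)$ and we are done; so suppose it equals $0$ and also $\lambda(G')=\lambda(G)$. Then $R_{G'}(f)=\lambda(G')$, so $f$ is an eigenfunction of $\lambda(G')$; being positive on $V_0$, zero on $\partial V$ and of norm~$1$, it is the first eigenfunction of $G'$ by Lemma~\ref{lemma3.2}. Now $d_{G'}(u_1)=d_G(u_1)$ and $d_{G'}(v_1)=d_G(v_1)$, while the neighbourhood of $u_1$ changes only by replacing $v_1$ with $u_2$, and that of $v_1$ only by replacing $u_1$ with $v_2$. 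Subtracting the Dirichlet eigenvalue equation $L(G)f=\lambda(G)f$ from $L(G')f=\lambda(G')f$ at the interior vertex $u_1$ gives $f(v_1)-f(u_2)=(\lambda(G')-\lambda(G))f(u_1)=0$, and the same computation at the interior vertex $v_1$ gives $f(u_1)-f(v_2)=0$. Thus both hypothesized inequalities are equalities, contradicting the assumption that one of them is strict; hence in the strict case $\lambda(G')<\lambda(G)$.

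The only step needing care is the strictness claim: the sign computation alone yields merely $\le$, and the product $2\bigl(f(u_1)-f(v_2)\bigr)\bigl(f(v_1)-f(u_2)\bigr)$ may vanish even when one hypothesis is strict. Resolving this is exactly where the simplicity of $\lambda(G)$ from Lemma~\ref{lemma3.2} and the eigenvalue equations at the interior vertices $u_1$ and $v_1$ are needed.
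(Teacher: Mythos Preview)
The paper does not give its own proof of Lemma~\ref{lemma3.3}; it simply cites it from \cite{biykoglu2007}. So there is nothing to compare against directly. That said, your argument is correct and is exactly the kind of Rayleigh--quotient computation the paper uses elsewhere (see the proof of Lemma~\ref{lemma4.1}, which has the same two-step structure: first a sign computation on the numerator, then the eigenvalue equation to force equality in the borderline case).

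Two minor remarks. First, your appeal to Lemma~\ref{lemma3.2} to identify $f$ as \emph{the} first eigenfunction of $G'$ is unnecessary and slightly risky, since nothing in the statement guarantees $G'$ is connected. All you need is that $R_{G'}(f)=\lambda(G')$ forces $f$ to satisfy $L(G')f=\lambda(G')f$ on $V_0$, which follows from the variational characterization alone; you then use this equation at $u_1$ and $v_1$, which is fine. Second, the algebraic identity
\[
(f(u_1)-f(u_2))^2+(f(v_1)-f(v_2))^2-(f(u_1)-f(v_1))^2-(f(u_2)-f(v_2))^2=2\bigl(f(u_1)-f(v_2)\bigr)\bigl(f(v_1)-f(u_2)\bigr)
\]
is correct and is the clean way to see the sign; you could alternatively stop at $-2\bigl(f(v_1)-f(u_2)\bigr)\bigl(f(v_2)-f(u_1)\bigr)\le 0$ directly from the hypotheses, which is the same thing. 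Your handling of the strict case is exactly right: the product can vanish when only one factor does, so strictness genuinely requires the extra step through the eigenvalue equations.
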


The following corollary can be directly deduced from
Lemma~\ref{lemma3.3}

\begin{corollary}\label{corollary3.4} For a given graphic unicyclic degree sequence
$\pi=(d_0, d_1, \cdots,d_{n-1}), $
  let $G=(V_0\cup \partial V, E_0\cup \partial
E)$ be a  graph with the Faber-Krahn property
 in $\mathcal{U}_{\pi}$. Suppose that there exist four vertices $u$, $v$, $x \in V_0$ and
  $y \in V_0\cup \partial V$ with
$uv$, $xy \in E_0\cup \partial E  $ and $ux, vy\notin E_0\cup
\partial E.$ Let $f$ be the first eigenfunction of $G$ and
$G^{\prime}=G- uv-xy+ux+ vy.$ If $G^{\prime} \in \mathcal{U}_{\pi}$,
then the following holds:

(1) if $f(u)=f(y)$, then $f(v)=f(x)$;

(2) if $f(u)>f(y)$, then $f(v)>f(x)$.
\end{corollary}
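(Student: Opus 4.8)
The plan is to derive both statements from Lemma~\ref{lemma3.3} by a straightforward case analysis on the sign of $\lambda(G')-\lambda(G)$. First I would observe that since $G$ has the Faber-Krahn property in $\mathcal{U}_{\pi}$ and $G' \in \mathcal{U}_{\pi}$, we must have $\lambda(G) \le \lambda(G')$. On the other hand, the hypotheses of Lemma~\ref{lemma3.3} are set up so that the four vertices $u,v,x,y$ here play the roles of $u_1, v_1, u_2, v_2$ (with $u_1 v_1 = uv$, $u_2 v_2 = xy$, and the swap producing $u_1 u_2 = ux$, $v_1 v_2 = vy$, which is exactly $G' = G - uv - xy + ux + vy$). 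So if we can check that $f(v_1) \ge f(u_2)$ and $f(v_2) \ge f(u_1)$ in our notation — that is, $f(v) \ge f(y)$ and $f(x) \ge f(u)$ — then the lemma gives $\lambda(G') \le \lambda(G)$, and combined with $\lambda(G) \le \lambda(G')$ we get $\lambda(G') = \lambda(G)$; moreover the ``strict'' clause of the lemma forces both inequalities $f(v)\ge f(y)$ and $f(x)\ge f(u)$ to be equalities.

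For part (1), suppose $f(u) = f(y)$ but, for contradiction, $f(v) \ne f(x)$. Without loss of generality $f(v) > f(x)$ (the other case is symmetric, swapping the roles of the two edges being rerouted, i.e.\ applying the lemma to the pair $xy, uv$ instead of $uv, xy$). Then consider $G' = G - uv - xy + ux + vy$: we have $f(v) > f(x)$ and $f(u) = f(y)$, i.e.\ $f(v_1) > f(u_2)$ (strict) and $f(v_2) \ge f(u_1)$, so Lemma~\ref{lemma3.3} yields $\lambda(G') < \lambda(G)$, contradicting the Faber-Krahn property of $G$. Hence $f(v) = f(x)$.

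For part (2), suppose $f(u) > f(y)$ but, for contradiction, $f(v) \le f(x)$. Now I would reroute the \emph{other} way: set $G'' = G - uv - xy + uy + vx$ — note $G''$ has the same degree sequence $\pi$ and lies in $\mathcal{U}_{\pi}$ (one checks $G''$ is still connected with $n$ edges, or more carefully that it is unicyclic; this is the kind of small structural point that needs a line of justification). Apply Lemma~\ref{lemma3.3} with $u_1 v_1 = uv$, $u_2 v_2 = xy$ but now swapping to $u_1 u_2 = uy$ (wait — that is the original adjacency). Actually the cleaner route: apply the lemma to the edges $vu$ and $yx$ with the swap $v y$ and $u x$ reversed; concretely, the configuration with $f(x) \ge f(v)$ and $f(u) > f(y)$ is exactly the hypothesis needed to conclude $\lambda(G) \ge \lambda(G-vy-\cdots)$ — I would restate this by relabelling so that the endpoint with the larger $f$-value on each side ends up matched correctly. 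The point is that $f(u) > f(y)$ combined with $f(x) \ge f(v)$ gives, after the appropriate relabelling, a strict instance of Lemma~\ref{lemma3.3} producing a graph in $\mathcal{U}_\pi$ with strictly smaller first Dirichlet eigenvalue, again contradicting the Faber-Krahn property. Therefore $f(v) > f(x)$.

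The main obstacle I anticipate is purely bookkeeping: matching the four vertices $u,v,x,y$ and the two possible rerouting directions to the precise roles $u_1,v_1,u_2,v_2$ in Lemma~\ref{lemma3.3}, and in part (2) making sure the graph obtained by the ``reversed'' swap is genuinely in $\mathcal{U}_{\pi}$ (still unicyclic, degree sequence preserved, and the non-edge conditions $u_1 u_2, v_1 v_2 \notin E_0 \cup \partial E$ satisfied). Once the labelling is pinned down, each case is a one-line application of the lemma plus the extremality $\lambda(G) \le \lambda(G')$, so there is no real analytic content beyond Lemma~\ref{lemma3.3}.
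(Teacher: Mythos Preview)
Your overall strategy is exactly what the paper intends (it simply says the corollary ``can be directly deduced from Lemma~\ref{lemma3.3}''), and your treatment of part~(1) is clean. Part~(2), however, has a real gap.

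The graph $G'' = G - uv - xy + uy + vx$ is not available to you. The hypotheses only give $ux, vy \notin E_0 \cup \partial E$; nothing rules out $uy \in E(G)$ or $vx \in E(G)$, so $G''$ need not be simple. Even if it were, the statement only assumes $G' \in \mathcal{U}_\pi$, not $G'' \in \mathcal{U}_\pi$, so you cannot invoke the Faber-Krahn minimality of $G$ against $G''$. Your subsequent attempt to salvage this by ``applying the lemma to the edges $vu$ and $yx$'' is the right instinct, but the line ``$\lambda(G) \ge \lambda(G - vy - \cdots)$'' is incoherent since $vy$ is not an edge of $G$.

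The fix is that no second graph is needed: the \emph{same} $G'$ works, just with the roles in Lemma~\ref{lemma3.3} reassigned. Take $u_1 = v$, $v_1 = u$, $u_2 = y$, $v_2 = x$. Then $G - u_1 v_1 - u_2 v_2 + u_1 u_2 + v_1 v_2 = G - uv - xy + vy + ux = G'$, and the lemma's hypotheses become $f(v_1) \ge f(u_2)$, i.e.\ $f(u) \ge f(y)$ (strict by assumption), and $f(v_2) \ge f(u_1)$, i.e.\ $f(x) \ge f(v)$ (the negation of what you want). Lemma~\ref{lemma3.3} then gives $\lambda(G') < \lambda(G)$, contradicting the Faber-Krahn property since $G' \in \mathcal{U}_\pi$ by hypothesis. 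This is the bookkeeping you anticipated; once you orient the two edges correctly, part~(2) is as short as part~(1).
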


\begin{lemma}\label{lemma3.5} For a given graphic unicyclic degree sequence
$\pi=(d_0, d_1, \cdots,d_{n-1})$
 , let $G=(V_0\cup \partial V, E_0\cup \partial E)$ be a
 graph
 with the Faber-Krahn property  in $ \mathcal{U}_{\pi}$.
 If  $C$ is a  cycle of $G$
 and $f$ is the first eigenfunction
of $G$, then $f(x) > f(u)$ for any $x \in V(C)$ and $u \in (V_0\cup
\partial V) \setminus V(C).$
\end{lemma}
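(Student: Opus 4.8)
The plan is to argue by contradiction using the edge-switching operation of Lemma~\ref{lemma3.3} (via Corollary~\ref{corollary3.4}) to move mass onto the cycle whenever a cycle vertex has smaller eigenfunction value than some vertex outside the cycle. Let $C$ be the unique cycle of $G$ and suppose, contrary to the claim, that there is a vertex $u\in(V_0\cup\partial V)\setminus V(C)$ and a vertex $x\in V(C)$ with $f(x)\le f(u)$. Among all such pairs choose one where $f(u)$ is as large as possible; in particular we may assume $f(u)\ge f(x)$ for every $x\in V(C)$, so $u$ has eigenfunction value at least that of the minimum of $f$ on $C$. Since $u\notin V(C)$, deleting $u$ from $G$ leaves the cycle intact, and $u$ lies in some branch hanging off $C$; let $P$ be the path in $G$ from $u$ back to the cycle, meeting $C$ at a vertex $w\in V(C)$, and let $u'$ be the neighbour of $u$ on $P$ that is one step closer to $w$ (possibly $u'=w$).

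First I would locate an edge incident to the cycle that can be rerouted toward $u$. Pick a vertex $z\in V(C)$ of minimum eigenfunction value on $C$, and let $z_1,z_2$ be its two neighbours on $C$. The key point is that $z$ has degree at least $2$ and the two cycle edges $zz_1$, $zz_2$ are available for switching. Apply Corollary~\ref{corollary3.4} with the edge $uu'$ on the branch and the edge $zz_1$ on the cycle: set $G'=G-uu'-zz_1+uz_1+zu'$ (after checking the non-adjacency conditions $uz_1\notin E$ and $u'z\notin E$, which can be arranged by choosing the branch and the cycle vertices suitably, or by first doing a preliminary switch to separate them). Because $G$ is unicyclic and $G'$ replaces one cycle edge by a path through $u$, the graph $G'$ is again unicyclic with the same degree sequence, so $G'\in\mathcal{U}_\pi$. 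Since $f(z)\le f(u')$ is false in general — here is where the hard part lies — I would instead set up the switch so that the hypotheses $f(v_1)\ge f(u_2)$, $f(v_2)\ge f(u_1)$ of Lemma~\ref{lemma3.3} are met, using $f(u)\ge f(x)$ for all $x\in V(C)$ together with the monotonicity of $f$ along the branch $P$ (which itself follows from Lemma~\ref{lemma3.2}, since the eigenfunction is positive on interior vertices and the branch behaves like a pendant path). This yields $\lambda(G')\le\lambda(G)$, contradicting the minimality of $\lambda(G)$ unless equality holds throughout.

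To finish, I would analyze the equality case. If $\lambda(G')=\lambda(G)$ then $G'$ also has the Faber-Krahn property, and Lemma~\ref{lemma3.3} forces $f(v_1)=f(u_2)$ and $f(v_2)=f(u_1)$ at the switched vertices; feeding this back through Corollary~\ref{corollary3.4}(1) propagates equalities of $f$-values along the branch and around the cycle. Iterating the switch then produces a sequence of Faber-Krahn graphs in $\mathcal{U}_\pi$ on which $f$ is forced to be constant on an ever-larger set containing both $u$ and all of $C$, which eventually contradicts either the simplicity of $\lambda(G)$ in Lemma~\ref{lemma3.2} (a nonconstant positive eigenfunction cannot be locally constant across an edge where the degrees differ) or the structure of the degree sequence (the condition $d_0\ge 3$ and no vertex of degree $2$ prevents the eigenfunction from being flat along a path). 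Hence no such pair $(u,x)$ exists, and $f(x)>f(u)$ for every $x\in V(C)$ and $u\in(V_0\cup\partial V)\setminus V(C)$.

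The main obstacle I anticipate is arranging the non-adjacency conditions $ux,vy\notin E_0\cup\partial E$ required by Corollary~\ref{corollary3.4} while simultaneously controlling the eigenfunction inequalities; in the unicyclic setting the branch attaching to the cycle may be short or the cycle may be a triangle, so one likely needs a small case analysis (triangle versus longer cycle, branch of length $1$ versus longer) and possibly a preparatory switch to create the configuration to which Lemma~\ref{lemma3.3} applies cleanly.
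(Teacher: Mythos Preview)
Your sketch has a genuine gap at exactly the point you flag with ``here is where the hard part lies.'' You try to push $u$ toward the cycle along the branch $P$ and need the inequality $f(u')\ge f(z)$ to apply Lemma~\ref{lemma3.3}. You then invoke ``monotonicity of $f$ along the branch $P$ (which itself follows from Lemma~\ref{lemma3.2})''---but Lemma~\ref{lemma3.2} only gives positivity of $f$ on interior vertices, not any monotonicity, and in fact no such monotonicity is available a priori. Without it the switch cannot be set up, and the subsequent equality-case discussion never gets off the ground. The degree hypothesis $d_0\ge 3$ that you appeal to at the end is also not assumed in Lemma~\ref{lemma3.5}.

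The paper's argument avoids this obstacle by walking in the \emph{opposite} direction: rather than moving from $u$ toward the cycle, it moves from $u$ toward a boundary vertex. Since $u\notin V(C)$, the edge $uw$ on the shortest $u$--$C$ path is a cut edge, and the component of $G-uw$ containing $u$ is a tree, so it contains a path $P=uu_1\cdots u_m$ with $u_m\in\partial V$. One then fixes a single cycle edge $xy$ with $uy\notin E(G)$ and, for each $i$, considers the switch $G-xy-u_{i-1}u_i+xu_{i-1}+yu_i$ (or its mirror). Corollary~\ref{corollary3.4}, seeded by the hypothesis $f(u)\ge f(x)$, propagates the bound $f(u_i)\ge\min\{f(x),f(y)\}>0$ step by step down the path. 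At the end this forces $f(u_m)>0$ for the boundary vertex $u_m$, contradicting $f|_{\partial V}=0$. The key idea you are missing is that the boundary, where $f$ is \emph{known} to vanish, provides the target for the contradiction; heading toward the cycle gives you nothing to contradict.
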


\begin{proof} Suppose that there are two vertices $x \in V(C)$ and $u \in (V_0\cup
\partial V) \setminus V(C)$
such that $f(x) \leq f(u).$ Then $f(u)\geq f(x)>0$ since $x$ is an
interior vertex. So $u$ is  an interior vertex by
Lemma~\ref{lemma3.2}. Let $uw $ be the first edge of the shortest
path from vertex $u$ to  cycle $C.$ Since $u \notin V(C)$ and $G$ is
unicyclic, $uw$ is a cut edge of $G$. Then $G-uw$ has  the exact two
connected components $G_1$ containing $C$ and $G_2$ containing $u.$
Moreover, $G_2$ is a tree and  contains all neighbor vertices except
$w$. Hence there exists a path $P=u u_1 \cdots u_m $ in $G_2$ with
$m\geq 1$ and $u_m \in
\partial V$.  Since $G$  is unicyclic, $u$ is adjacent to at most one vertex in $V(C)$.
Hence there exists a vertex $y\in V(C)$ with  $xy \in E(C)$ and $uy
\notin E(G)$. Since $V(C)\subseteq V(G_1) $ and $V(P)\subseteq
V(G_2)$, we have $V(P) \cap V(C)=\phi$ and $xu_i, yu_i \notin E(G)$
for all $1 \leq i \leq m$. Let $G_1=G-xy-uu_1+yu+xu_1.$  Then $G_1
\in \mathcal{U}_{\pi}$ and
  $f (u_1)>f(y)\ge\min\{f(x), f(y)\}>0$ by Corollary~\ref{corollary3.4}.
Further $G_2=G-xy-u_1 u_2+yu_2+xu_1.$ Then $G_2 \in
\mathcal{U}_{\pi}$
 and $f(u_2)>f(x)\ge \min\{f(x), f(y)\}>0$ by Corollary~\ref{corollary3.4}.
By repeating this procedure,  we have $f(u_i)> f(x)\ge\min\{f(x),
f(y)\}>0$ if $i$ is even and  $f(u_i) >f(y)\ge \min\{f(x), f(y)\}>0$
if $i$ is odd, where $i=1, \cdots, m.$ Hence at last, we have
$f(u_m)> \min\{f(x), f(y)\}>0$. But $f(u_m)=0$  since  $u_m$ is a
boundary vertex. It is a contradiction. Therefore, the assertion
holds.
\end{proof}

\begin{lemma}\label{lemma3.6} For a given graphic unicyclic degree sequence
$\pi=(d_0, d_1, \cdots,d_{n-1})$
 with $3\le d_0\le\dots\le d_k $ and $d_{k+1}=\cdots=d_{n-1}=1$,
let $G=(V_0 \cup \partial V, E_0\cup \partial E)$ be a graph
 with the Faber-Krahn property  in $ \mathcal{U}_{\pi}$
 and $f$ be the first eigenfunction of $G$.
If there exists a set $V^{\prime}=\{v_0,v_1, v_2\}$ such that
$f(v_0) \geq f(v_1)\geq f(v_2)\geq f(x)$ for $x \in (V_0 \cup
\partial V) \setminus V^{\prime}$, then  the induced subgraph
$G[V^{\prime}]$  by $V^{\prime}$  is the only one  cycle of $G$.
\end{lemma}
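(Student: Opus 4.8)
\medskip
\noindent\emph{Proof strategy.}\quad Let $C$ denote the unique cycle of $G$. The plan is to show (i) $V'\subseteq V(C)$ and (ii) $|V(C)|=3$; together these give $V(C)=V'$, and since a unicyclic graph has no chords, $G[V']$ is then exactly the cycle $C$. For (i), the only tool needed is Lemma~\ref{lemma3.5}: it says $f$ is strictly larger on $V(C)$ than on its complement, so $V(C)$ consists precisely of the $|V(C)|$ vertices of largest $f$-value. Since $f(v_0)$ is the global maximum, $v_0\in V(C)$. If $v_1\notin V(C)$, then each of the $\ge 3$ vertices of $C$ would have $f$-value greater than $f(v_1)$; but any vertex with $f$-value exceeding $f(v_1)\ge f(v_2)$ lies in $V'=\{v_0,v_1,v_2\}$, and within $V'$ only $v_0$ can exceed $f(v_1)$ --- a contradiction. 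The same counting (now at most $v_0,v_1$ can lie above the threshold) forces $v_2\in V(C)$.

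For (ii), suppose $\ell:=|V(C)|\ge 4$; I aim to contradict the Faber--Krahn property of $G$. Here the hypothesis that $2$ does not occur in $\pi$ is essential: every vertex of $C$ then has degree $\ge 3$ and hence a neighbour off $C$. Pick an edge $w_0w_1$ of $C$ with $f(w_0)\ge f(w_1)$, let $w_2$ be the $C$-neighbour of $w_1$ other than $w_0$, and let $t_0$ be a neighbour of $w_0$ lying off $C$. Since $\ell\ge 4$, the vertices $w_0,w_1,w_2$ are distinct and $w_0w_2\notin E(G)$ (a chord would create a second cycle), and likewise $w_1t_0\notin E(G)$ (otherwise $w_0w_1t_0$ would be a second cycle). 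Put
$$G'=G-w_1w_2-w_0t_0+w_1t_0+w_0w_2 .$$
One checks that $G'\in\mathcal{U}_{\pi}$: deleting the cycle edge $w_1w_2$ turns $G$ into a tree; deleting the further (tree) edge $w_0t_0$ from it leaves two components, one containing $t_0$ and one containing $w_0,w_1,w_2$; the new edge $w_1t_0$ reconnects them into a tree in which the $w_0$--$w_2$ path is $C$ with $w_1$ removed, so the new edge $w_0w_2$ recreates a single cycle, now of length $\ell-1$. Then Corollary~\ref{corollary3.4} applies with $(u,v,x,y)=(w_2,w_1,w_0,t_0)$: the incidence conditions were verified above, and since $w_2\in V(C)$ while $t_0\notin V(C)$, Lemma~\ref{lemma3.5} gives $f(u)=f(w_2)>f(t_0)=f(y)$; part (2) of the corollary then forces $f(v)>f(x)$, i.e.\ $f(w_1)>f(w_0)$, contradicting the choice $f(w_0)\ge f(w_1)$. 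Hence $\ell=3$.

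The step I expect to require the most care is the verification that $G'\in\mathcal{U}_{\pi}$ --- in particular that the two deletions disconnect $G$ into exactly the two claimed pieces and that $w_0w_2$ and $w_1t_0$ are genuinely non-edges of $G$; this is precisely where both $\ell\ge 4$ and the absence of degree-$2$ vertices are used, and everything after it is immediate. As an alternative to invoking Corollary~\ref{corollary3.4}, one may note that $f$ is an admissible test function for $G'$ with $R_{G'}(f)\le R_G(f)=\lambda(G)$ whenever $f(w_0)=f(w_1)$, so $f$ would then also be a first eigenfunction of $G'$, whose cycle omits $w_1$, and Lemma~\ref{lemma3.5} applied to $G'$ would give $f(w_0)>f(w_1)$ --- the same contradiction --- while the remaining case $f(w_0)>f(w_1)$ already yields $\lambda(G')<\lambda(G)$ by the strict form of Lemma~\ref{lemma3.3}.
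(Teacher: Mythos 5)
Your proof is correct. The first half --- placing $v_0,v_1,v_2$ on the cycle via Lemma~\ref{lemma3.5} --- is exactly the paper's opening step. The second half genuinely diverges: the paper never bounds $|V(C)|$ directly, but instead establishes $v_0v_1,\ v_0v_2,\ v_1v_2\in E(G)$ one edge at a time, each time performing a switch that joins two of the top three vertices (trading a cycle edge at one of them against an off-cycle edge at another) and invoking Lemma~\ref{lemma3.3} together with the strict inequality of Lemma~\ref{lemma3.5}. You instead shorten the cycle: assuming $|V(C)|\ge 4$, the double switch $G-w_1w_2-w_0t_0+w_1t_0+w_0w_2$ stays in $\mathcal{U}_{\pi}$, and Corollary~\ref{corollary3.4}(2) with $f(w_2)>f(t_0)$ forces $f(w_1)>f(w_0)$, contradicting the choice of the edge $w_0w_1$; hence $|V(C)|=3=|V'|$, so $V(C)=V'$ and $G[V']=C$. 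Both routes use the same machinery (the switching lemma plus Lemma~\ref{lemma3.5}) and both lean on the hypothesis that no entry of $\pi$ equals $2$ --- you to produce a neighbour of $w_0$ off $C$, the paper to produce neighbours of $v_1$ and $v_2$ off $C$. Your version avoids the paper's three-fold case analysis and makes the role of the no-degree-$2$ hypothesis more transparent, at the price of the somewhat more delicate verification that the double switch preserves unicyclicity and that $w_0w_2$ and $w_1t_0$ are non-edges; you identify this as the critical step and carry it out correctly.
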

\begin{proof} Since $G$ is unicyclic, let $C$ be the only one  cycle in $G$.
 By Lemma~\ref{lemma3.5}, it is easy to see that
  $v_0, v_1, v_2 \in V(C).$  we now prove that $G[V^{\prime}]$ is a triangle.
  If $v_0v_1 \notin
E(G),$ then there are two vertices  $x \in V(C)$ and $y \notin V(C)$
 such that $v_0x\in E(G)$ and $v_1y \in E(G)$.
 Let $G_1=G-v_0x-v_1y+v_0v_1+xy.$ Clearly, $G_1 \in  \mathcal{U}_{\pi}.$
 Moreover, $f(v_1) \geq f(x)$ and $f(v_0)> f(y)$ by Lemma~\ref{lemma3.5}.
Then  $\lambda (G_1)< \lambda (G)$ by Lemma~\ref{lemma3.3}, which is
a contradiction with $G $ having the Faber-Krahn property in $
\mathcal{U}_{\pi}.$ Similarly, we have $v_0v_2 \in E(G).$ Suppose
now $v_1v_2 \notin E(G).$ Then there is a vertex $u \in V(C)$ such
that $u \neq v_0$ and $v_1 u \in E(G).$ Since $v_2 \in V_0,$ there
is a vertex $z \notin V(C)$ such that  $v_2z \in E(G).$
 Let $G_2=G-v_1u-v_2z+v_1v_2+uz.$ Note that $f(v_2) \geq f(u)$ and
  $f(v_1)> f(z)$ by Lemma~\ref{lemma3.5}.  Then $G_2 \in  \mathcal{U}_{\pi}$
and $\lambda (G_1)< \lambda (G)$ by Lemma~\ref{lemma3.3}, which is
impossible. So $v_1v_2 \in E(G).$ The proof is completed.
\end{proof}

{\bf Proof of Theorem~\ref{theorem1.1}}:
  Without loss  of generality,  assume $V(G)$ $=\{v_0$, $v_1,$ $\cdots,$ $v_{n-1}\}$ such that $f(v_0)\geq f(v_{1})\geq \cdots \geq f(v_{n-1}).$
 Then we have  $v_0v_1,$ $v_0v_2,$ $v_1v_2 \in E(G)$ by Lemma~\ref{lemma3.6}. Clearly, $v_0$ is an interior vertex. Let $v_0$ be the root of $G.$
 Suppose $h(G)=\max\limits_{v \in V(G)}h(v)$. Let $W_i=\{v \in V(G)| h(v)=i\}$ and $|W_i|=n_i$ for $0 \leq i \leq h(G)$.
For convenience of our proof,
we relabel the vertices of $G$.
Let $v_0=v_{0, 1}.$ Then $W_0=\{v_{0, 1}\}.$ Clearly, $n_1=d(v_0)$. The
vertices in $W_1$ are relabeled as $v_{1, 1},$ $v_{1, 2},$ $\cdots,$ $v_{1, n_1}$
such that $f(v_{1, 1})$ $\geq f({v_{1, 2}})$ $\geq \cdots$  $\geq f(v_{1, n_1}).$
Assume that the vertices in $W_t$ have been already relabeled as $v_{t, 1},
{v_{t, 2}}, \cdots, v_{t, n_t}$. Then the vertices in $W_{t+1}$ can be
relabeled as $v_{t+1, 1}, v_{t+1, 2}, \cdots, v_{t+1, n_{t+1}}$ such
that they satisfy the following conditions: if $v_{t, k}v_{t+1, i},$
$v_{t, k}v_{t+1, j}\in E(G)$ and $i<j$, then  $f(v_{t+1, i}) \ge
f(v_{t+1, j})$; if $v_{t, k}v_{t+1, i},  v_{t, l}v_{t+1, j}\in E(G)$
and $k<l$, then $i<j$.

{\bf Claim :} $f(v_{t, 1})\geq f(v_{t, 2})\geq \cdots \geq f(v_{t, n_t})\geq  f(v_{{t+1}, 1})$ for $0 \leq t \leq h(G).$

We will prove that the Claim  holds by  induction. Clearly,
the Claim  holds for $t=0.$
Assume now that the Claim  holds for $t=s-1$.  In the following we
prove that the Claim holds for $t=s$. If there are two
vertices $v_{s, i},v_{s, j}\in W_s$ with $i < j$ and $f(v_{s, i})< f(v_{s, j})$, then
there exist two vertices
 $v_{s-1, k}, v_{s-1, l}\in W_{s-1}$ with $k < l$ such that
 $v_{s-1, k}v_{s, i}, v_{s-1, l}v_{s, j} \in E(G)$.
 By the induction hypothesis, $f(v_{s-1, k})\geq f(v_{s-1, l}).$
 Let $G_1 = G-v_{s-1, k}v_{s, i}-v_{s-1, l}v_{s, j}+v_{s-1, k}v_{s, j}+v_{s-1, l}v_{s, i}.$
Clearly, $G_1\in \mathcal{U}_{\pi}.$ By Lemma~\ref{lemma3.3}, we have
$\lambda(G_1)< \lambda(G),$ which is a contradiction to our assumption that $G$ has the  Faber-Krahn property
 in $ \mathcal{U}_{\pi}$.  So $f(v_{s, i}) \geq
f(v_{s, j}).$ Assume now $f(v_{s, n_s})<  f(v_{{s+1}, 1}).$ Note
that $d(v_0) \geq 3.$ It is easy to see that $v_{s, n_s}v_{s-1,
n_{s-1}}$,  $v_{{s}, 1}v_{{s+1}, 1} \in E(G).$ By the induction
hypothesis, $f(v_{s-1, n_{s-1}}) \geq f(v_{s, 1}).$ Let $G_2=G-v_{s,
n_s}v_{s-1, n_{s-1}}-v_{{s}, 1}v_{{s+1}, 1} +v_{s, n_s}v_{{s},
1}+v_{s-1, n_{s-1}}v_{{s+1}, 1}.$ Then  there exists a  $G_2\in
\mathcal{U}_{\pi}$  such that $\lambda(G_2)< \lambda(G)$ by
Lemma~\ref{lemma3.3}, which is also a contradiction. So  the Claim
holds. Therefore we finish our proof.$\blacksquare$

\section{\large\bf{The proof of Theorem~\ref{theorem1.2} }}

In order to prove Theorem~\ref{theorem1.2}, we need the following
lemmas
\begin{lemma}\label{lemma4.1}For a given graphic unicyclic degree sequence
$\pi=(d_0, d_1, \cdots,d_{n-1})$, let $G=(V_0\cup \partial V,E_0\cup
\partial E) \in \mathcal{U }_{\pi}$ with the first eigenfunction $f$.
If there exist two vertices $v_1, v_2 \in V_0$ such that $u_{t}v_{1}
\in E(G)$, $u_{t}v_2 \notin E(G)$ for $t=1,2,\cdots,p\leq d(v_1)-2$,
let  $G^{\prime}$ be the graph obtained from $G$ by deleting the $p$
edges  $u_1 v_1, \cdots, u_pv_1$ and adding the $p$ edges $u_1 v_2,
\cdots, u_p v_2$. If $G^{\prime}$ is connected and  $f(v_1)\geq
f(v_2)\geq f(u_t)$ for $t=1, 2, \cdots, p,$  then $G^{\prime}$ and
$G$ have the same boundary vertices,  and
$$\lambda(G^{\prime})\leq \lambda(G).$$
Moreover, the inequality is strict if there exists $u_s$ with $1
\leq s \leq p$ such that $f(v_1)>f(u_s)$.
\end{lemma}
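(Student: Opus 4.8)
The plan is to follow the proof of Lemma~\ref{lemma3.3}, using the variational characterisation $\lambda(G')=\min_{g\in S}R_{G'}(g)$, but now rotating all $p$ edges $u_1v_1,\dots,u_pv_1$ onto $v_2$ simultaneously and testing with $f$ itself. \textbf{First I would check that the boundary is unchanged.} Passing from $G$ to $G'$ lowers $d(v_1)$ by $p$, raises $d(v_2)$ by $p$, and leaves every other degree (in particular each $d(u_t)$) fixed; since $p\le d(v_1)-2$ the degree of $v_1$ in $G'$ is still at least $2$, and $v_2\in V_0$ so the degree of $v_2$ in $G'$ is also at least $2$. Hence the set of vertices of degree $1$ is the same in $G$ and $G'$, and since the boundary vertices are exactly those of degree $1$, $\partial V(G')=\partial V(G)$ and $V_0(G')=V_0(G)$. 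Thus $G'$ is a connected graph with the same (nonempty) boundary, $f$ vanishes on $\partial V(G')$, so $f\in S$ for $G'$ and $\lambda(G')\le R_{G'}(f)$.

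\textbf{Next I would compare the two Rayleigh quotients.} The denominator $\sum_v f^2(v)$ is the same for $G$ and $G'$, and the numerators differ only in the $p$ rotated edges, so
$$\sum_{uv\in E(G')}(f(u)-f(v))^2-\sum_{uv\in E(G)}(f(u)-f(v))^2=\sum_{t=1}^{p}\Bigl[(f(u_t)-f(v_2))^2-(f(u_t)-f(v_1))^2\Bigr].$$
Each bracket equals $(f(v_1)-f(v_2))\bigl(2f(u_t)-f(v_1)-f(v_2)\bigr)$, whose first factor is $\ge0$ by hypothesis and whose second factor equals $(f(u_t)-f(v_1))+(f(u_t)-f(v_2))\le0$ because $f(u_t)\le f(v_2)\le f(v_1)$. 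Hence the displayed difference is $\le0$, so $R_{G'}(f)\le R_G(f)=\lambda(G)$, giving $\lambda(G')\le\lambda(G)$.

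\textbf{For the strict inequality I would argue through the eigenvalue equation}, which handles $f(v_1)>f(v_2)$ and $f(v_1)=f(v_2)$ uniformly. Suppose $\lambda(G')=\lambda(G)$. Then $\lambda(G')\le R_{G'}(f)\le\lambda(G)=\lambda(G')$ forces $R_{G'}(f)=\lambda(G')$, so $f$ is an eigenfunction of $\lambda(G')$; being positive on $V_0(G')=V_0(G)$ it is the first eigenfunction of $G'$ by Lemma~\ref{lemma3.2}, hence $L(G')f(v_1)=\lambda(G')f(v_1)$. Since also $v_1\in V_0(G)$, we have $L(G)f(v_1)=\lambda(G)f(v_1)$, and $\lambda(G')=\lambda(G)$ then forces $L(G)f(v_1)=L(G')f(v_1)$. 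But deleting the edges $u_1v_1,\dots,u_pv_1$ gives
$$L(G)f(v_1)-L(G')f(v_1)=p\,f(v_1)-\sum_{t=1}^{p}f(u_t)=\sum_{t=1}^{p}\bigl(f(v_1)-f(u_t)\bigr)\ge f(v_1)-f(u_s)>0,$$
using $f(v_1)\ge f(u_t)$ for all $t$ and the hypothesis $f(v_1)>f(u_s)$. This contradiction yields $\lambda(G')<\lambda(G)$.

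\textbf{The main obstacle} is really only the strict inequality in the case $f(v_1)=f(v_2)$: there the Rayleigh quotient of $f$ does not decrease at all, so one cannot conclude from the quotient alone and must instead exhibit a vertex — namely $v_1$ after the rotation — at which $f$ violates the eigenvalue equation of $G'$; this is why the argument is routed through Lemma~\ref{lemma3.2} rather than through $R_{G'}$ directly. A secondary point that must not be skipped is the first step: verifying that the $p$-fold rotation neither creates nor destroys boundary vertices, since this is exactly what makes $f$ a legitimate trial function for $G'$ and lets the denominators cancel.
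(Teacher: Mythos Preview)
Your proof is correct and follows essentially the same line as the paper's: use $f$ as a trial function for $G'$ to get $\lambda(G')\le R_{G'}(f)\le R_G(f)=\lambda(G)$, and for strictness show that equality would force $f$ to be an eigenfunction of $G'$ and then derive a contradiction from the eigenvalue equation at $v_1$. Your treatment is in fact more careful than the paper's in two places---you spell out why the boundary is unchanged (the paper just says ``Clearly''), and you factor the bracket explicitly---while the appeal to Lemma~\ref{lemma3.2} to upgrade ``an eigenfunction'' to ``the first eigenfunction'' is harmless but unnecessary, since any eigenfunction of $\lambda(G')$ already satisfies $L(G')f(v_1)=\lambda(G')f(v_1)$.
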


 \begin{proof} Clearly, $G^{\prime} \in \mathcal{U}_{\pi}$  and $G^{\prime}$ and $G$ have the same boundary
vertices. Further
\begin{eqnarray}
\lambda(G^{\prime})-\lambda(G) &\leq& R_{G^{\prime}}(f) - R_{G}(f)\nonumber\\
&=& \sum \limits_{i=1}^{t}(f(v_2)-f(u_i))^2 - \sum \limits_{i=1}^{t}(f(v_1)-f(u_i))^2 \nonumber\\
\ & \leq & 0.\nonumber
\end{eqnarray}
Assume that there exists a vertex $u_s$  such that $f(v_1)>f(u_s)$.
If $\lambda(G^{\prime})=\lambda(G)$, then $f$ also must be an
eigenfunction of $\lambda(G^{\prime}).$ By
\begin{eqnarray}
\lambda(G^{\prime})f(v_1)=L(G^{\prime})f(v_1)
&=&\sum\limits_{z, v_1z\in E(G^{\prime})}(f(v_1)-f(z))\nonumber\\
&=&\lambda(G)f(v_1)=L(G)f(v_1)\nonumber\\
&=&\sum\limits_{z, v_1z\in E(G^{\prime})}(f(v_1)-f(z))+\sum\limits_{i=1}^{t}(f(v_1)-f(u_i)),\nonumber
\end{eqnarray}
we have $f(v_1)=f(u_t)$ for $t=1, 2, \cdots, p$. This is a
contradiction to  $f(v_1)>f(u_s)$. So the assertion holds.
\end{proof}

 Let $G$ be a graph with  root $v_0$ and  $u$
   be adjacent to $v$. If  $h(u)=h(v)+1$, then we
call $u$ a {\it child } of $v$ and $v$ a {\it parent }
of $u.$ If $h(u)=h(v)$, we call $u$ a {\it brother} of $v.$  With
this notation, we have following:

\begin{lemma}\label{lemma4.2} For a given graphic unicyclic degree sequence
$\pi=(d_0, d_1, \cdots,d_{n-1})$
 with $3\le d_0\le\dots\le d_k $ and $d_{k+1}=\cdots=d_{n-1}=1$,
 let $G=(V_0\cup \partial V, E_0\cup \partial E)$ be a
 graph with   the Faber-Krahn property in
$\mathcal{U}_{\pi}$. Then the  SLO-ordering of $G$ induced
by the first eigenfunction $f$ of $\lambda(G)$ has the
following property: `` for every interior vertex $v$ without brother,
 there exists a child  $u$ of $v$ such
that $f(u) < f(v)$".
\end{lemma}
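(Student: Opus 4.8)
The plan is to argue by contradiction and to push everything onto the Dirichlet eigenvalue equation at $v$. Work with the SLO-ordering of $G$ built in the proof of Theorem~\ref{theorem1.1}: let $W_0,W_1,\dots$ be the distance levels, $|W_t|=n_t$, and relabel the vertices of $W_t$ as $v_{t,1},\dots,v_{t,n_t}$ so that the Claim proved there holds, namely $f(v_{t,1})\ge\cdots\ge f(v_{t,n_t})\ge f(v_{t+1,1})$ for every $t$. Fix an interior vertex $v$ with no brother and set $i=h(v)$. Any child $u$ of $v$ belongs to $W_{i+1}$, so $f(u)\le f(v_{i+1,1})\le f(v_{i,n_i})\le f(v)$; thus the assertion of the lemma is equivalent to ``it is not the case that $f(u)=f(v)$ for every child $u$ of $v$'', and I shall rule out the configuration in which $f(u)=f(v)$ for all children $u$.

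First I would record the local structure of $G$ near $v$. By Lemma~\ref{lemma3.6} together with the proof of Theorem~\ref{theorem1.1}, the unique cycle of $G$ is the triangle on $v_0,v_1,v_2$, which occupies levels $0$ and $1$; in particular $v_1$ and $v_2$ each have a brother, so a brother-free interior vertex $v$ is either $v_0$ or a vertex off the cycle. If $v=v_0$ then $h(v_0)=0$, so every one of the $d_0\ge 3$ neighbours of $v_0$ lies in $W_1$ and is a child. If $v\ne v_0$ then $v$ is not on the cycle; since $v$ has no brother no neighbour of $v$ lies in $W_i$, and since $G$ is unicyclic $v$ has exactly one neighbour in $W_{i-1}$ — two distinct such neighbours $w_1,w_2$, joined to the root by shortest paths none of which passes through $v$, would yield a cycle containing $v$, contradicting that the only cycle of $G$ avoids $v$. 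Hence $v$ has a parent $w\in W_{i-1}$ and $d(v)-1\ge 2$ children, and the Claim gives $f(w)\ge f(v_{i-1,n_{i-1}})\ge f(v_{i,1})\ge f(v)$. So in all cases $v$ does have children, and every neighbour of $v$ that is not a child has $f$-value at least $f(v)$.

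Now assume $f(u)=f(v)$ for every child $u$ of $v$. Writing out the Dirichlet eigenvalue equation and splitting the neighbours of $v$ into its children and its parent $w$ (the parent being absent when $v=v_0$),
\[
\lambda(G)\,f(v)=(L(G)f)(v)=\sum_{z:\;vz\in E(G)}(f(v)-f(z))=(f(v)-f(w))+\sum_{u}(f(v)-f(u))=f(v)-f(w)\le 0,
\]
where the last sum runs over the children $u$ of $v$. This contradicts $\lambda(G)>0$ (Lemma~\ref{lemma3.2}) and $f(v)>0$ on interior vertices. Therefore some child $u$ of $v$ satisfies $f(u)<f(v)$, which is exactly the SLO-ordering property asked for.

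The eigenvalue computation in the last step is routine. The step I expect to be the real obstacle is the structural one: showing that a brother-free interior vertex $v\ne v_0$ lies beneath a single vertex of level $h(v)-1$, and hence has at least one child — without this the conclusion would not even be well posed. This is exactly where unicyclicity, and the identification of the cycle with the triangle on $v_0,v_1,v_2$ (hence the hypothesis $d_0\ge 3$, via Lemma~\ref{lemma3.6}), enter; one should also dispose of the degenerate cases of the two-shortest-paths argument, for instance when the last common vertex of the two paths equals $w_1$ or $w_2$, in which cases a cycle through $v$ is still produced.
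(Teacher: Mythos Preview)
Your proposal is correct and follows essentially the same approach as the paper: argue by contradiction and evaluate the Dirichlet eigenvalue equation at $v$, using the SLO-ordering from Theorem~\ref{theorem1.1} to get $f(w)\ge f(v)\ge f(u)$ for the parent $w$ and each child $u$, so that if all children had $f(u)=f(v)$ one would obtain $\lambda(G)f(v)\le 0$. You are in fact more careful than the paper about the structural point you flag --- the paper simply writes ``let $w$ be the parent of $v$'' without arguing uniqueness, whereas you supply the unicyclicity/bridge argument showing a brother-free $v\ne v_0$ has exactly one neighbour in $W_{h(v)-1}$.
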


\begin{proof}
By Lemma~\ref{lemma3.6}  and Theorem~\ref{theorem1.1},
 $G$ has an SLO-ordering $v_0 \prec v_1 \prec
\cdots \prec
 v_{n-1}$ such that $f(v_0)\geq f(v_1)\geq \cdots \geq f(v_{n-1})$ and the
 only one
cycle  $v_{0}v_{1}v_{2}$.
 If $v=v_0$ and  $f(x)=f(v)$ for any child $x$ of $v$, then by $L(G)f=\lambda(G)f$,  we have
$$\lambda(G)f(v_0)=d(v_0)f(v_0)-\sum_{wv_0\in E(G)}f(w)=0,$$
which implies $\lambda(G)=0$. This is a contradiction to the
statement (1) of Lemma~\ref{lemma3.2}. If  $v\neq v_0$,  let $w$ be
the parent of $v$ and $u_1, u_2, \cdots, u_t$ be  all children of
$v.$ Then by the proof of Theorem~\ref{theorem1.1}, $f(w) \geq f(v)
\geq f(u_j)$ for $j=1, 2, \cdots, t$. If $f(u_j) = f(v)$ for $j=1,
2, \cdots, t$, we have
\begin{eqnarray}
\lambda(G)f(v)=L(G)f(v) & = & d(v)f(v)-f(w)-\sum\limits_{j=1}^{t}f(u_j) \nonumber\\
\  & =& f(v)-f(w)\leq 0,\nonumber
\end{eqnarray}
which also is a contradiction to Lemma~\ref{lemma3.2}. Hence the
assertion holds.
\end{proof}

For a given unicyclic degree sequence $\pi=(d_{0}, d_{1}, \cdots,
d_{n-1})$ with $3 \leq d_{0}\leq d_{1}\leq \cdots \leq d_{k-1}$ and
 $d_k=d_{k+1}=\cdots =d_{n-1}=1$, where $n\ge 3$ and $2<k<n-1$.
 We now construct a unicyclic graph $U_{\pi}^{\ast}$ with
degree sequence $\pi$ as follows. Select a vertex $v_{0,1}$ as a
root and begin with $v_{0, 1}$ of the zero-th layer. Let $s_1=d_0$
and select $s_1$ vertices ${v_{1, 1}=v_1, v_{1, 2}=v_2, \cdots,
v_{1,s_1}}=v_{s_1}$ of the first layer such that they are adjacent
to $v_{0,1}$ and $v_{1,1}$ is adjacent to $v_{1,2}$.  Next we
construct the second layer as follows. Let
$s_2=\sum\limits_{i=1}^{s_1}d_i-s_1-2$ and select $s_{2}$ vertices
$v_{2,1}, v_{2,2}, \cdots, v_{2, s_2}$ such that $v_{1,1}$ is
adjacent to $v_{2,1},\cdots, v_{2, d_1-2}$; $v_{1,2}$ is adjacent to
$v_{2, d_1-1}, \cdots, v_{2, d_1+d_2-4}$, $v_{1,3}$ is adjacent to
$v_{2, d_1+d_2-3}, \cdots, v_{2, d_1+d_2+d_3-5}$, $\cdots, $ $
v_{1,j}$ is adjacent to $v_{2, d_1+\cdots d_{j-1}-j}, \cdots, v_{2,
d_1+\cdots+d_j-j-2}$, $\cdots, $ $ v_{1, s_1}$ is adjacent to $v_{2,
d_1+\cdots+d_{s_1-1}-s_1}, \cdots, v_{2,
d_1+\cdots+d_{s_1}-s_1-2}=v_{2, s_2}$. In general, assume that all
vertices of the $t$-st layer have been constructed and are denoted
by  ${v_{t,1}, v_{t,2}, \cdots, v_{t,s_{t}}}$. We construct all the
vertices of the $(t+1)$-st layer by the induction. Let
$s_{t+1}=d_{s_1+\cdots+ s_{t-1}+1}+\cdots +d_{s_1+\cdots+s_t}-s_t$
and select $s_{t+1}$ vertices $v_{{t+1}, 1}, v_{{t+1}, 2}, \cdots,
v_{t+1, s_{t+1}}$ of the $(t+1)$st layer such that $v_{t,1}$ is
adjacent to $v_{t+1,1},$ $\cdots,$ $v_{t+1,
d_{s_1+\cdots+s_{t-1}+1}-1}$, $\cdots,$ $ v_{t, s_t}$ is adjacent to
$v_{t+1, s_{t+1}-d_{s_1+\cdots+s_t}+2}, \cdots, v_{t+1, s_{t+1}}$.
 In this way, we obtain the unique unicyclic graph
$U_{\pi}^{\ast}$ with degree sequence $\pi$ such that the root
$v_{0, 1}$ has minimum degree in all interior vertices.
\begin{example} Let $\pi =(3, 3, 3, 4, 4, 5, 1, 1, 1, 1, 1, 1, 1, 1, 1,
1)$. Then $U_{\pi}^{\ast}$ is as follows in Fig.2:
\end{example}

\setlength{\unitlength}{1mm}
\begin{picture}(120, 40)
\centering \put(60,35){\circle*{1}} \put(40,30){\circle*{1}}
\put(65,30){\circle*{1}} \put(80,30){\circle*{1}}
\put(40,25){\circle*{1}} \put(65,25){\circle*{1}}
\put(75,25){\circle*{1}} \put(80,25){\circle*{1}}
\put(85,25){\circle*{1}} \put(35,20){\circle*{1}}
\put(40,20){\circle*{1}} \put(45,20){\circle*{1}}
\put(55,20){\circle*{1}} \put(60,20){\circle*{1}}
\put(65,20){\circle*{1}} \put(70,20){\circle*{1}}

\thicklines \put(40,30){\line(1,0){25}} \put(65,30){\line(-1,0){15}}
\put(60,35){\line(-4,-1){20}} \put(60,35){\line(1,-1){5}}
\put(60,35){\line(4,-1){20}} \put(40,30){\line(0,-1){5}}
\put(65,30){\line(0,-1){5}} \put(80,30){\line(0,-1){5}}
\put(40,25){\line(-1,-1){5}} \put(40,25){\line(1,-1){5}}
\put(65,25){\line(-1,-1){5}} \put(65,25){\line(1,-1){5}}

\put(80,30){\line(-1,-1){5}} \put(80,30){\line(1,-1){5}}
\put(40,25){\line(0,-1){5}} \put(40,25){\line(1,-1){5}}
\put(65,25){\line(-2,-1){10}} \put(65,25){\line(0,-1){5}}
\put(60,36){$v_{0,1}$} \put(34,30){$v_{1,1}$} \put(66,30){$v_{1,2}$}
\put(81,30){$v_{1,3}$} \put(34,25){$v_{2,1}$} \put(58,25){$v_{2,2}$}
\put(71,23){$v_{2,3}$} \put(78,23){$v_{2,4}$} \put(86,23){$v_{2,5}$}
\put(30,17){$v_{3,1}$} \put(36,17){$v_{3,2}$} \put(43,17){$v_{3,3}$}
\put(52,17){$v_{3,4}$} \put(58,17){$v_{3,5}$} \put(64,17){$v_{3,6}$}
\put(70,17){$v_{3,7}$} \put(30,5){\small{Fig.2\ \ \  $\displaystyle
U_{\pi} ^{\ast} \hbox{ with degree sequence}\ \pi $}}
\end{picture}

{\bf Proof of Theorem~\ref{theorem1.2}}: Let $G$ be a  graph with the Faber-Krahn property in $ \mathcal{U}_{\pi}$ and $f$ be the first
eigenfunction of
 $G$. By Lemma~\ref{lemma3.6}  and Theorem~\ref{theorem1.1},
 $G$ has an SLO-ordering $v_0 \prec v_1 \prec
\cdots \prec
 v_{n-1}$ such that $f(v_0)\geq f(v_1)\geq \cdots \geq f(v_{n-1})$ and the
 only one
cycle  $v_{0}v_{1}v_{2}$. Since $f$ is the first
eigenfunction of
 $G$,
  $v_0,$ $v_1,$ $\cdots,$ $v_{k-1}$ are all interior vertices of $G$ by Lemma~\ref{lemma3.2}.

Claim: $d(v_0) \leq d(v_1)\leq \cdots
\leq d(v_{k-1})$.

Assume that the Claim does not hold.
Then there exists the
smallest non-negative integer $t \in \{0, 1, \cdots ,k-2\}$  such that $d(v_t)>
d(v_{t+1})$.
If $t \geq 3,$  then $v_t$ has $d(v_t)-1$ children, one parent and no brother.
Let
$w_1,w_2,\cdots,w_{d(v_t)-1}$ be all the children of
 $v_t$ with $f(w_{i})\geq f(w_{i+1})$ for $1 \leq i \leq d(v_t)-2$. Then we have  $f(v_t)\geq
f(v_{t+1}) \geq f(w_{d(v_{t+1})})\geq f(w_{d(v_{t+1})+1})\geq \cdots \geq
f(w_{d(v_t)-1})$ by  Theorem~\ref{theorem1.1}.  Further $f(v_{t}) > f(w_{d(v_t)-1})$ by Lemma~\ref{lemma4.2}.
Let $G_1$ be the graph obtained from $G$ by deleting the edges $v_{t}
w_s$ and
adding the edges $v_{t+1}w_s$  for $s=d(v_{t+1}),d(v_{t+1})+1,\cdots,d(v_t)-1$.
Clearly, $G_1 \in \mathcal{U}_{\pi}$ and $\lambda(G_1) < \lambda(G)$ by Lemma~\ref{lemma4.1}.
This is a contradiction to our assumption that $G$ has the Faber-Krahn property
 in $ \mathcal{U}_{\pi}$.
If $t=0,$ then
$v_0$ has $d(v_0)$ children  and no parent.
If $t=1$ or $2,$  $v_t$ has $d(v_t)-2$ children, one parent and one brother.
Note that there are only two vertices $v_1, v_2$ having brother. Then for any
$u \in \{v_0, v_1, \cdots, v_{k-1}\},$
there is a child $x$ of $u$ such that $f(x) < f(u)$ by Lemma~\ref{lemma3.5} and  Lemma~\ref{lemma4.2}.
By applying the similar argument as above, our hypothesis is also impossible for $t \leq 2.$
Thus the Claim  holds. Then
by the Calim, we have $d(v_i)=d_i$ for $0 \leq i \leq n-1.$
So  $G$ is isomorphic to $U_{\pi}^{\ast}$.
The proof is completed.  $\blacksquare$

 From the proof of Theorem~\ref{theorem1.2},  we can get the following

  \begin{corollary}\label{corollary4.4}  For a given graphic unicyclic degree sequence
  $ \pi = (d_0, d_1, \cdots, d_{n-1})$  with $3 \leq d_0 \leq d_1\leq
\cdots \leq d_{k-1}$ and $d_k=d_{k+1}=\cdots =d_{n-1}=1$, let $G$ be
the graph with the Faber-Krahn property in $\mathcal{U}_{\pi}$.
  Then $G$ has an SLO-ordering  $v_0 \prec v_1
\prec \cdots \prec
 v_{n-1}$ such that $d(v_i)=d_i$ for $i=0,1,\cdots,n-1$.
\end{corollary}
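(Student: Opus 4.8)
The plan is to obtain the statement directly from the proof of Theorem~\ref{theorem1.2}, supplemented by Theorem~\ref{theorem1.1} and Lemma~\ref{lemma3.2}, with only some bookkeeping at the boundary vertices to add.

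First I would recall the ordering already produced along the way. By Lemma~\ref{lemma3.6} and Theorem~\ref{theorem1.1}, the extremal graph $G$ admits an SLO-ordering $v_0 \prec v_1 \prec \cdots \prec v_{n-1}$ with $f(v_0) \geq f(v_1) \geq \cdots \geq f(v_{n-1})$ and with unique cycle $v_0 v_1 v_2$, where $f$ is the first eigenfunction of $G$. By part~(2) of Lemma~\ref{lemma3.2}, $f$ is strictly positive on every interior vertex and equal to $0$ on every boundary vertex. The hypothesis $d_k = d_{k+1} = \cdots = d_{n-1} = 1$ together with $3 \le d_0 \le \cdots \le d_{k-1}$ forces $G$ to have exactly $n-k$ vertices of degree $1$ (the boundary vertices) and exactly $k$ interior vertices, each of degree at least $3$; hence ordering by decreasing $f$ places all interior vertices before all boundary vertices, i.e. $v_0, \dots, v_{k-1}$ are the interior vertices and $v_k, \dots, v_{n-1}$ are the boundary vertices. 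In particular $d(v_i) = 1 = d_i$ for $k \le i \le n-1$.

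Next I would invoke the Claim proved inside the proof of Theorem~\ref{theorem1.2}, namely $d(v_0) \le d(v_1) \le \cdots \le d(v_{k-1})$ (this is precisely where the absence of degree-$2$ vertices and Lemmas~\ref{lemma4.1} and \ref{lemma4.2} enter, through the edge-switching argument applied at the smallest index $t$ with $d(v_t) > d(v_{t+1})$). Since $(d(v_0), \dots, d(v_{k-1}))$ is a rearrangement of $(d_0, \dots, d_{k-1})$ and the latter is already non-decreasing, any non-decreasing rearrangement must agree with it coordinatewise, so $d(v_i) = d_i$ for $0 \le i \le k-1$. Combining this with the previous paragraph gives $d(v_i) = d_i$ for all $0 \le i \le n-1$, and the ordering $v_0 \prec \cdots \prec v_{n-1}$ is an SLO-ordering by construction, which is the assertion.

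I do not anticipate a real obstacle: every substantive ingredient is already contained in the proof of Theorem~\ref{theorem1.2}, and the only point requiring a small amount of care is checking that the first $k$ positions of the $f$-ordering are occupied by exactly the $k$ interior vertices, which is immediate once one observes that the stated hypothesis rules out degree-$2$ vertices. As an alternative route one could instead quote Theorem~\ref{theorem1.2} to get $G \cong U_{\pi}^{\ast}$ and then verify directly from the layer-by-layer construction of $U_{\pi}^{\ast}$ that its natural SLO-ordering assigns degree $d_i$ to the $i$-th vertex; this yields the same conclusion with even less effort.
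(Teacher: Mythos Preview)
Your proposal is correct and follows essentially the same route as the paper: the corollary is stated there with no separate proof beyond the remark ``From the proof of Theorem~\ref{theorem1.2}'', and you have simply spelled out the two ingredients already present in that proof (the identification of $v_0,\dots,v_{k-1}$ as the interior vertices via Lemma~\ref{lemma3.2}, and the Claim $d(v_0)\le\cdots\le d(v_{k-1})$) together with the obvious matching of two non-decreasing rearrangements of the same multiset.
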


\section{\large\bf{Examples and Remarks  }}
B${\i}$y${\i}$ko\u{g}lu and  Leydold  \cite{biykoglu2007}
characterized all extremal graphs with the Faber-Krahn property
among all trees with any tree degree sequence $\pi$. Moreover, the
unique extremal graph can be regarded as a ball approximation. In
this paper,  For a given graphic unicyclic degree sequence
  $ \pi = (d_0, d_1, \cdots, d_{n-1})$  with $3 \leq d_0 \leq d_1\leq
\cdots \leq d_{k-1}$ and $d_k=d_{k+1}=\cdots =d_{n-1}=1$, we
characterized all extremal graphs  with the Faber-Krahn property
among all unicyclic graphs in $\mathcal{U}_{\pi}$. The unique
extremal graph can also be regarded as a ball approximation. It is
natural to ask that the assertion still holds for other graphic
unicyclic degree sequence $\pi$? In the following, we present some
observation on graphic unicyclic degree sequence $\pi$ with the
frequency of 2 being at least one.

\begin{example}\label{example5.1} Let
 $G_1$ and  $G_2$  be the following two graphs with degree sequence $\pi_1
=(2, 2, 2, 3, 3, 4, $ $ 5, 1, 1, 1, 1, 1, 1, 1)$:
\end{example}
\setlength{\unitlength}{1mm}
\begin{picture}(120, 45)
\centering \put(40,40){\circle*{1}} \put(35,35){\circle*{1}}
\put(50,35){\circle*{1}} \put(50,30){\circle*{1}}
\put(50,25){\circle*{1}} \put(45,20){\circle*{1}}
\put(60,20){\circle*{1}} \put(40,15){\circle*{1}}
\put(45,15){\circle*{1}} \put(50,15){\circle*{1}}
\put(55,15){\circle*{1}} \put(60,15){\circle*{1}}
\put(65,15){\circle*{1}} \put(70,15){\circle*{1}} \thicklines
\put(40,40){\line(-1,-1){5}} \put(40,40){\line(2,-1){10}}
\put(35,35){\line(1,0){15}} \put(50,35){\line(0,-1){5}}
\put(50,30){\line(0,-1){5}} \put(50,25){\line(-1,-1){5}}
\put(50,25){\line(2,-1){10}} \put(45,20){\line(-1,-1){5}}
\put(45,20){\line(0,-1){5}} \put(45,20){\line(1,-1){5}}
\put(45,20){\line(-1,-1){5}} \put(60,20){\line(-1,-1){5}}
\put(60,20){\line(0,-1){5}} \put(60,20){\line(1,-1){5}}
\put(60,20){\line(2,-1){10}} \put(40,42){$v_{0}$}
\put(30,35){$v_{1}$} \put(52,35){$v_{2}$} \put(52,29){$v_{3}$}
\put(52,25){$v_{4}$} \put(39,20){$v_{5}$} \put(62,20){$v_{6}$}
\put(37,12){$v_{7}$} \put(42,12){$v_{8}$} \put(48,12){$v_{9}$}
\put(53,12){$v_{10}$} \put(58,12){$v_{11}$} \put(63,12){$v_{12}$}
\put(68,12){$v_{13}$} \put(100,40){\circle*{1}}
\put(90,35){\circle*{1}} \put(110,35){\circle*{1}}
\put(100,30){\circle*{1}} \put(100,25){\circle*{1}}
\put(90,20){\circle*{1}} \put(110,20){\circle*{1}}
\put(85,15){\circle*{1}} \put(90,15){\circle*{1}}
\put(95,15){\circle*{1}} \put(100,15){\circle*{1}}
\put(105,15){\circle*{1}} \put(110,15){\circle*{1}}
\put(115,15){\circle*{1}} \thicklines \put(100,40){\line(-2,-1){10}}
\put(100,40){\line(2,-1){10}} \put(90,35){\line(2,-1){10}}
\put(110,35){\line(-2,-1){10}} \put(100,30){\line(0,-1){5}}
\put(100,25){\line(-2,-1){10}} \put(100,25){\line(2,-1){10}}
\put(90,20){\line(-1,-1){5}} \put(90,20){\line(0,-1){5}}
\put(90,20){\line(1,-1){5}} \put(110,20){\line(-2,-1){10}}
\put(110,20){\line(-1,-1){5}} \put(110,20){\line(0,-1){5}}
\put(110,20){\line(1,-1){5}} \put(100,42){$v_{0}$}
\put(85,35){$v_{1}$} \put(111,35){$v_{2}$} \put(103,29){$v_{3}$}
\put(103,25){$v_{4}$} \put(85,20){$v_{5}$} \put(113,20){$v_{6}$}
\put(81,12){$v_{7}$} \put(88,12){$v_{8}$} \put(93,12){$v_{9}$}
\put(99,12){$v_{10}$} \put(104,12){$v_{11}$} \put(110,12){$v_{12}$}
\put(116,12){$v_{13}$} \put(55,3){\small{Fig.3 \ \ \ \
$\displaystyle G_1 \ \hbox{and} \  G_2 \  \
 $}}
\end{picture}

\noindent Then  $\lambda (G_1)= 0.1017 < \lambda(G_2)= 0.1227$. So
the graphs with Faber-Krahn property in $\mathcal{U}_{\pi_1}$ may
not be ball approximation. Moreover, Corollary~\ref{corollary4.4}
does not generally
 hold, since degrees of the interior vertices in $G_1$  do not
satisfy that $v_2\prec v_3$ implies $d(v_2)\le d(v_3)$ for interior
vertices $v_2, v_3$.

\begin{example}\label{example5.2}
Let  $G_3$ and  $G_4$ be the following two graphs
 with degree sequence $\pi_2 =(2, 2, 2, $ $4, 4, 5, $ $ 1, 1, 1, 1, 1, 1, 1)$
 \end{example}

 \setlength{\unitlength}{1mm}
\begin{picture}(120, 45)
\centering \put(40,40){\circle*{1}} \put(30,35){\circle*{1}}
\put(50,35){\circle*{1}}

\put(40,30){\circle*{1}} \put(30,25){\circle*{1}}
\put(50,25){\circle*{1}}

\put(25,20){\circle*{1}} \put(30,20){\circle*{1}}
\put(35,20){\circle*{1}}

\put(45,20){\circle*{1}} \put(55,20){\circle*{1}}
\put(50,20){\circle*{1}} \put(60,20){\circle*{1}}

\thicklines \put(40,40){\line(-2,-1){10}}
\put(40,40){\line(2,-1){10}} \put(30,35){\line(2,-1){10}}
\put(50,35){\line(-2,-1){10}} \put(40,30){\line(-2,-1){10}}
\put(50,25){\line(2,-1){10}} \put(40,30){\line(2,-1){10}}

\put(30,25){\line(-1,-1){5}} \put(30,25){\line(0,-1){5}}
\put(30,25){\line(1,-1){5}} \put(50,25){\line(-1,-1){5}}
\put(50,25){\line(0,-1){5}} \put(50,25){\line(1,-1){5}}
\put(50,25){\line(2,-1){5}} \put(40,42){$v_{0}$}
\put(25,35){$v_{1}$} \put(52,35){$v_{2}$} \put(43,30){$v_{3}$}
\put(24,25){$v_{4}$} \put(53,25){$v_{5}$} \put(23,17){$v_{6}$}
\put(28,17){$v_{7}$} \put(35,17){$v_{8}$} \put(43,17){$v_{9}$}
\put(48,17){$v_{10}$} \put(55,17){$v_{11}$} \put(60,17){$v_{12}$}
\put(100,40){\circle*{1}} \put(95,35){\circle*{1}}
\put(110,35){\circle*{1}} \put(100,30){\circle*{1}}
\put(115,30){\circle*{1}} \put(100,25){\circle*{1}}
\put(110,25){\circle*{1}} \put(115,25){\circle*{1}}
\put(120,25){\circle*{1}} \put(105,20){\circle*{1}}
\put(100,20){\circle*{1}} \put(95,20){\circle*{1}}
\put(90,20){\circle*{1}} \thicklines \put(100,40){\line(-1,-1){5}}
\put(100,40){\line(2,-1){10}} \put(95,35){\line(1,0){15}}
\put(110,35){\line(-2,-1){10}} \put(110,35){\line(1,-1){5}}
\put(100,30){\line(0,-1){5}} \put(100,25){\line(-2,-1){10}}
\put(100,25){\line(1,-1){5}} \put(100,25){\line(-1,-1){5}}
\put(100,25){\line(0,-1){5}} \put(115,30){\line(1,-1){5}}
\put(115,30){\line(0,-1){5}} \put(115,30){\line(-1,-1){5}}
\put(100,42){$v_{0}$} \put(90,35){$v_{1}$} \put(111,35){$v_{2}$}
\put(94,30){$v_{3}$} \put(118,30){$v_{4}$} \put(94,25){$v_{5}$}
\put(107,23){$v_{6}$} \put(113,23){$v_{7}$} \put(118,23){$v_{8}$}
\put(88,17){$v_{9}$} \put(93,17){$v_{10}$} \put(98,17){$v_{11}$}
\put(103,17){$v_{12}$} \put(55,5){\small{Fig.4 \ \ \ $\displaystyle
G_3 \ \hbox{and} \  G_4 $}}
\end{picture}

Then $\lambda (G_3)= 0.2479< \lambda(G_4)= 0.2819$. Hence the graph
with Faber-Krahn property in $\mathcal{U}_{\pi_2}$ may not contain a
triangle. In order to propose our question, we need the following
notation.

Let $\pi = (d_0, d_1, \cdots, d_{n-1})$ be a graphic unicyclic
degree sequence with $2 \leq d_0 \leq d_1\leq \cdots \leq d_{k-1}$
and $d_k=d_{k+1}=\cdots =d_{n-1}=1$.  If $d_2\ge 3,$ then we
construct the graph $U_{\pi}^*$ by the method in section 4. If
$d_0=\cdots=d_{m-1}=2$ and $d_{m}=3$ for $3\le m\le k-1$, we can
construct the graph  $U_{\pi}^{(1)}$ by the similar methods in
section 4, such that $d(v_{0, 1})=d(v_{1, 1})=2,$ $d(v_{1, 2})=3$,
$d(v_{2, 1})=2$, etc. (for example, see $G_1$  in Fig. 3). If
$d_0=\cdots=d_{m-1}=2$ and $d_{m}\ge 4$ for $3\le m\le k-1$, we can
construct the graph $U_{\pi}^{(2)}$ as follows: Let
$\pi^{\prime}=(d_m-2, \cdots, d_{k-1}, 1, \cdots, 1)$ be the
positive integer sequence obtained from $\pi$ by dropping the first
$m$ terms and changing its $(m+1)$-th term  to $d_m-2$. It is easy
to see that $\pi^{\prime}$ is a graphic tree degree sequence. Then
we can get the unique SLO$^\ast$- tree $T_{\pi^{\prime}}$ (see
\cite{biykoglu2007}). Let $U_{\pi}^{(2)}$ be the graph obtained by
identifying a vertex of a cycle of order $m+1$ with the root of
$T_{\pi^{\prime}}$ (for example, see $G_3$ in Fig. 4).

We conclude this paper with the following conjecture.

\begin{conjecture}\label{conjecture5.3}
Let $\pi=(d_0, d_1, \cdots, d_{k-1}, 1, \cdots, 1)$ be a graphic unicyclic
 degree sequence
 with $2 \leq d_0 \leq d_1\leq
\cdots \leq d_{k-1}$ and $d_k=\cdots=d_{n-1}=1$. Then

(1).
 $U_{\pi}^*$ is the unique graph with the Faber-Krahn property in $\mathcal{U}_{\pi}$ if $d_0=2$ and $d_2\ge 3$;

(2).
$U_{\pi}^{(1)}$ is the unique graph  with the Faber-Krahn property in $\mathcal{U}_{\pi}$ if $d_0=\cdots=d_{m-1}=2$ and $d_{m}=3$, where $3\le m\le k-1$;

(3).
$U_{\pi}^{(2)}$ is the unique graph  with the Faber-Krahn property in $\mathcal{U}_{\pi}$ if $d_0=\cdots=d_{m-1}=2$ and $d_{m}\ge 4$, where $3\le m\le k-1$.
\end{conjecture}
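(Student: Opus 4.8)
The plan is to follow the architecture of the proofs of Theorems~\ref{theorem1.1} and~\ref{theorem1.2}, correcting for the two ingredients that fail once the value $2$ appears in $\pi$: Lemma~\ref{lemma3.6}, whose proof tacitly assumes a cycle vertex has a neighbour off the cycle (which can fail at a degree-$2$ vertex), and Corollary~\ref{corollary4.4}, the monotonicity of degrees along the SLO-ordering, shown false by Example~\ref{example5.1}. What carries over verbatim for an \emph{arbitrary} graphic unicyclic $\pi$ is Corollary~\ref{corollary3.4}, Lemma~\ref{lemma3.5} and Lemma~\ref{lemma4.1}, since none of their proofs uses $d_0\ge 3$. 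So let $G\in\mathcal{U}_\pi$ be an extremal graph with first eigenfunction $f>0$ on $V_0$, order the vertices $v_0,\dots,v_{n-1}$ so that $f(v_0)\ge\cdots\ge f(v_{n-1})$, and let $C$ be the unique cycle of $G$, with $\ell=|V(C)|$. By Lemma~\ref{lemma3.5} every vertex of $C$ has strictly larger $f$-value than every vertex off $C$, so $V(C)=\{v_0,\dots,v_{\ell-1}\}$: the cycle consists of exactly the top $\ell$ vertices in the $f$-order. The whole problem therefore reduces to determining $\ell$, the cyclic arrangement of $v_0,\dots,v_{\ell-1}$ on $C$, and the forest hanging off $C$; and for the last item, once $C$ is fixed, one reduces the degree sequence and invokes the extremal-tree theorem of~\cite{biykoglu2007} (the SLO$^{\ast}$-tree) together with Corollary~\ref{corollary3.4} and Lemma~\ref{lemma4.1} to glue the pieces.

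Everything thus turns on the shape of $C$, and here the monotone picture of Sections~3--4 genuinely breaks (Example~\ref{example5.2} shows $C$ need not be a triangle). In case (1) ($d_0=2$, $d_2\ge 3$, hence at most two $2$'s) one expects the Lemma~\ref{lemma3.6} argument, suitably adapted, still to force $C=v_0v_1v_2$: its obstruction arises only at a degree-$2$ cycle vertex, and among $v_0,v_1,v_2$ at least one has degree $\ge 3$ and so a neighbour off the triangle available for the required edge-switches; with $C$ fixed the attached forest is then pinned down, giving $U_\pi^{\ast}$. In case (3) ($d_0=\cdots=d_{m-1}=2$, $d_m\ge 4$) one expects $\ell=m+1$: the $m$ degree-$2$ vertices together with one vertex $z$ of degree $d_m$ whose degree splits as $2+(d_m-2)$ between $C$ and a tree $T_{\pi^{\prime}}$ with $\pi^{\prime}=(d_m-2,d_{m+1},\dots,d_{k-1},1,\dots,1)$, yielding $U_\pi^{(2)}$. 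Case (2) ($d_m=3$) is intermediate: $\ell=3$ again, but one extra degree-$2$ vertex survives off $C$ as the tail between the triangle and the first true branch vertex, giving $U_\pi^{(1)}$; in particular the natural move ``slide a stray degree-$2$ vertex onto $C$'' must here strictly \emph{raise} $\lambda$, which already signals the delicacy of the length estimate. Uniqueness in all three cases should follow from the strictness clauses in Lemmas~\ref{lemma3.3} and~\ref{lemma4.1}.

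The main obstacle is precisely this dichotomy in $\ell$: proving that $d_m=3$ forces the triangle-with-tail graph $U_\pi^{(1)}$ while $d_m\ge 4$ forces the long cycle $C_{m+1}$ of $U_\pi^{(2)}$. The local exchange lemmas compare only neighbouring configurations and cannot on their own decide between two globally different shapes, so I expect one must analyse the eigenfunction on $C$ directly. Along either arc $a_0,a_1,\dots$ of $C$ from its $f$-maximal vertex $a_0$ to its $f$-minimal vertex, a degree-$2$ interior vertex satisfies
\[
f(a_{i-1})+f(a_{i+1})=(2-\lambda)\,f(a_i),
\]
a second-order linear recurrence; solving it and using the symmetry of $C$ about its $f$-minimal vertex, one can compare the decay of $f$ around $C$ quantitatively with the decay of $f$ down the tree attached at $a_0$, and it is this comparison that should rule out the wrong cycle length. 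A secondary difficulty is that, Corollary~\ref{corollary4.4} being false, there is no self-contained greedy degree-assignment as in the proof of Theorem~\ref{theorem1.2}; one must instead quote the external characterisation of extremal trees for $\pi^{\prime}$ and then check, case by case, that the degrees along the cycle and along the tail are distributed exactly as in $U_\pi^{\ast}$, $U_\pi^{(1)}$ and $U_\pi^{(2)}$ --- which is where most of the remaining bookkeeping lives.
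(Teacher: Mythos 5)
This statement is not proved in the paper at all: it is stated as Conjecture~\ref{conjecture5.3}, and the authors explicitly leave it open (``We conclude this paper with the following conjecture''). So there is no proof of record to compare against, and the only question is whether your proposal itself constitutes a proof. It does not. What you have written is a plan that correctly identifies which ingredients of Sections~3--4 survive when $2$'s enter the degree sequence (Corollary~\ref{corollary3.4}, Lemma~\ref{lemma3.5}, Lemma~\ref{lemma4.1}) and which break (Lemma~\ref{lemma3.6}, Corollary~\ref{corollary4.4}), and it correctly isolates the central difficulty --- deciding the cycle length $\ell$, i.e.\ why $d_m=3$ should force a triangle with a degree-$2$ tail while $d_m\ge 4$ should force the long cycle $C_{m+1}$. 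But at exactly that point the argument is replaced by ``one expects'' and ``should rule out'': the second-order recurrence $f(a_{i-1})+f(a_{i+1})=(2-\lambda)f(a_i)$ is written down but never solved, the promised quantitative comparison between decay around $C$ and decay down the attached tree is never performed, and no inequality is derived that distinguishes the two candidate shapes. Since local edge-switch lemmas cannot compare globally different cycle lengths (as you yourself observe), this missing comparison is not a routine verification --- it is the entire content of the conjecture.

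There are further unaddressed gaps even granting the cycle shape. The reduction of the hanging forest to the SLO$^{\ast}$-tree $T_{\pi'}$ of \cite{biykoglu2007} is asserted, but the gluing step (that the extremal forest attached to a fixed cycle, with the eigenfunction now satisfying a nonzero ``boundary condition'' at the attachment vertex, coincides with the extremal tree for $\pi'$ in the pure Dirichlet problem) is not justified; the degree distribution along the cycle and along the tail in cases (2) and (3) is deferred to ``bookkeeping'' that is never done; and uniqueness is attributed to the strictness clauses of Lemmas~\ref{lemma3.3} and~\ref{lemma4.1} without exhibiting, for each competing graph, a strict switch. As it stands the proposal is a reasonable research programme for attacking the conjecture, not a proof of it.
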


\begin{center}
\vskip 0.3cm
 {\bf Acknowledgement}
\end{center}

 The authors would like to thank the  anonymous referees  for
  their kind comments and suggestions.

\end {document}